\documentclass[12pt]{amsart}

\usepackage{color}

\usepackage{graphicx}
\usepackage{amsmath,amsthm}
\usepackage{amsfonts}
\usepackage{amssymb}
\usepackage{a4wide}
\title{Enumeration of $\C{H}$-strata in quantum matrices with respect to dimension}
\author{J.~Bell, K.~Casteels and S.~Launois}
\thanks{The first and second named authors thank NSERC for its generous support.  The third named author's research was supported by a Marie Curie European Reintegration Grant within the $7^{\mbox{th}}$ European Community Framework Programme}
\keywords{}
\subjclass[2000]{16W35; 20G42}

\address{Jason Bell\\
Department of Mathematics\\
Simon Fraser University\\
Burnaby, BC V5A 1S6, Canada
}

\email{jpb@math.sfu.ca}

\address{Karel Casteels\\
Department of Mathematics\\
University of California\\
Santa Barbara, CA 93106
}

\email{casteels@math.ucsb.edu}

\address{St\'ephane Launois \\
School of Mathematics, Statistics \& Actuarial science \\
University of Kent\\
Canterbury, Kent CT2 7NF, United Kingdom}

\email{S.Launois@kent.ac.uk}

\newtheorem{thm}{Theorem}[section]
\newtheorem{lem}[thm]{Lemma}

\newtheorem{prop}[thm]{Proposition}
\newtheorem{cor}[thm]{Corollary}

\theoremstyle{definition}
\newtheorem{defn}[thm]{Definition}

\newtheorem{rem}{Remark}

\newtheorem{ex}[thm]{Example}

\newtheorem{notn}[thm]{Notation}

\newcommand{\C}{\mathcal}
\newcommand{\F}{\mathfrak}
\newcommand{\Oq}{
\mathcal{O}_q(M_{m,n}(\mathbb{K}))
}

\newcommand{\lef}[1]{
\textnormal{left}(#1)
}

\newcommand{\up}[1]{
\textnormal{up}(#1)
}

\newcommand{\hspec}{
\mathcal{H}\textnormal{-spec}(\Oq)
}
\newcommand{\vb}[1]{
{\boldsymbol #1}
}

\begin{document}
\maketitle
\date
\abstract{We present a combinatorial method to determine the dimension of $\C{H}$-strata in the algebra of $m\times n$ quantum matrices $\Oq$ as follows. To a given $\C{H}$-stratum we associate a certain permutation via the notion of pipe-dreams. We show that the dimension of the $\C{H}$-stratum is precisely the number of odd cycles in this permutation. Using this result, we are able to give closed formulas for the trivariate generating function that counts the $d$-dimensional $\C{H}$-strata in $\Oq$. Finally, we extract the coefficients of this generating function in order to settle conjectures proposed by the first and third named authors~\cite{bldim,bll} regarding the asymptotic proportion of $d$-dimensional $\C{H}$-strata in $\Oq$. }
\section{Introduction}

For positive integers $m$ and $n$ and for $q$ a nonzero element of a field $\mathbb{K}$ that is not a root of unity, let us denote by $\C{A}=\Oq$ the algebra of $m\times n$ quantum matrices. There is a natural action of the algebraic torus $\C{H}=(\mathbb{K}^*)^{m+n}$ on $\C{A}$ which, by work of Goodearl and Letzter \cite{bg} allows the prime spectrum of $\C{A}$ to be partitioned into a finite number of disjoint \emph{$\C{H}$-strata}. Moreover, each $\C{H}$-stratum is homeomorphic (with respect to the Zariski topology) to the prime spectrum of a commutative Laurent polynomial ring over $\mathbb{K}$. 

In this work, we complete the project started in~\cite{bln} and continued in~\cite{bldim,bll}; namely, that of determining a useful condition to determine the dimension of a given $\C{H}$-stratum. Furthermore, this condition enables one to easily enumerate the $\C{H}$-strata in $\Oq$ with respect to dimension. The principal motivation for this originates in Dixmier's idea that for an infinite-dimensional algebra, identifying the primitive ideals forms an important first step towards understanding the representation theory of the algebra. On the other hand, as a consequence of the $\C{H}$-stratification theory, the primitive ideals are those prime ideals that are maximal within their $\C{H}$-stratum. In particular, primitive $\C{H}$-primes correspond to zero-dimensional $\C{H}$-strata.

Our condition is roughly described as follows. Within each $\C{H}$-stratum there is a unique prime ideal that is invariant under the action of $\C{H}$, a so-called \emph{$\C{H}$-prime}. Next, to any given $\C{H}$-prime, we may associate a certain permutation $\tau$, which for reasons that will become clear, we call a \emph{toric permutation}. Our first main result is the following theorem.
\begin{thm} \label{maintheorem}
Let $J$ be an $\C{H}$-prime in $\Oq$ and let $\tau$ be the associated toric permutation. The dimension of the $\C{H}$-stratum containing $J$ is precisely the number of odd cycles in the disjoint cycle decomposition of $\tau$.
\end{thm}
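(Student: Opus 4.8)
The plan is to reduce the statement to a rank computation for a skew-symmetric integer matrix and then to carry out that computation cycle by cycle. First I would invoke the $\C{H}$-stratification machinery of Goodearl and Letzter together with Cauchon's theory of deleting derivations: localizing $\Oq/J$ at the multiplicative set $\C{E}$ of nonzero $\C{H}$-eigenvectors produces a quantum torus $\C{A}_J = \mathbb{K}_\Lambda[T_1^{\pm 1},\dots,T_N^{\pm 1}]$, whose commutation rules $T_iT_j = q^{\lambda_{ij}}T_jT_i$ are recorded by a skew-symmetric matrix $\Lambda = (\lambda_{ij}) \in M_N(\mathbb{Z})$. Because $q$ is not a root of unity, a Laurent monomial $T_1^{a_1}\cdots T_N^{a_N}$ is central exactly when $\Lambda a = 0$, so the centre of $\C{A}_J$ is the group algebra of $\ker_{\mathbb{Z}}\Lambda$, a commutative Laurent polynomial ring in $\dim_{\mathbb{Q}}\ker\Lambda$ indeterminates. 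Since the $\C{H}$-stratum of $J$ is homeomorphic to the prime spectrum of this centre, the dimension of the stratum equals $N - \operatorname{rank}_{\mathbb{Q}}\Lambda = \dim_{\mathbb{Q}}\ker\Lambda$. This converts the theorem into the purely combinatorial claim that $\operatorname{corank}\Lambda$ equals the number of odd cycles of $\tau$.

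Next I would use the pipe-dream description set up earlier to read the entries of $\Lambda$ off the toric permutation $\tau$. The key structural point I would aim to establish is that, after an invertible change of the generating monomials (equivalently, a $\mathbb{Z}$-congruence $\Lambda \mapsto P^{T}\Lambda P$ with $P \in GL_N(\mathbb{Z})$, which preserves both rank and corank), the matrix $\Lambda$ becomes block-diagonal with one block for each cycle of $\tau$, the block attached to a cycle having size equal to the length of that cycle. Granting this, $\operatorname{corank}\Lambda$ is the sum of the coranks of the individual blocks.

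It then remains to compute the corank of the block $B_\ell$ attached to a single cycle of length $\ell$. Here the decisive elementary fact is that every skew-symmetric matrix over $\mathbb{Q}$ has even rank, so $\operatorname{corank}B_\ell \equiv \ell \pmod 2$; in particular an even cycle forces even corank and an odd cycle forces odd corank, so already $\operatorname{corank}B_\ell \ge 1$ precisely when $\ell$ is odd. To pin the coranks down exactly --- corank $0$ for even $\ell$ and corank $1$ for odd $\ell$ --- I would exhibit a nonvanishing principal Pfaffian of the maximal admissible size, or equivalently produce, for each odd cycle, an explicit central Laurent monomial (an alternating product of the $T_i$ around the cycle, whose signs close up precisely because the length is odd) and then show these monomials span $\ker_{\mathbb{Q}}\Lambda$. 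Summing the block coranks gives $\operatorname{corank}\Lambda = \#\{\text{odd cycles of }\tau\}$, which completes the argument.

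I expect the main obstacle to be the two interlocking combinatorial steps: showing that the pipe-dream data makes $\Lambda$ congruent to a cycle-block-diagonal matrix, and then proving the rank-maximality of each block. The parity of the skew-symmetric rank settles one inequality for free and fixes the answer modulo $2$, but ruling out the extra degeneracies (corank $2$ on an even block, or corank $3$ on an odd block) is where the explicit combinatorics of $\tau$ --- and hence the precise way its cycles govern the commutation matrix --- must be used in an essential way.
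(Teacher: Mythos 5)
Your opening reduction is sound and is exactly Theorem~\ref{BL} (proved in~\cite{bldim}): the dimension of the stratum equals the corank of the skew-symmetric matrix recording the $q$-commutation data, which for the Cauchon diagram $D$ is the matrix $M(D)$ indexed by the \emph{white squares} of $D$. The gap is in the combinatorial half, and it is fatal as stated. First, the sizes do not match: $M(D)$ is $N\times N$ where $N$ is the number of white squares, while the cycles of $\tau$ partition only the $m+n$ row/column labels; for the all-white $3\times 3$ diagram one has $N=9$ but $m+n=6$, and the leftover $3$ dimensions could never form a nonsingular skew-symmetric complement, since such a block would have odd size. (Each white square carries two pipe-arcs which may belong to different cycles of $\tau$, so the white squares do not even partition according to cycles.) Second, the parity bookkeeping is inverted and actually contradicts the theorem: in this paper an ``odd cycle'' is one with an odd number of inversions, hence of \emph{even} length, and an alternating $\pm1$ pattern closes up around a cycle precisely when the length is even, not odd. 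If the block attached to a cycle had size equal to the cycle length, then an odd (even-length) cycle would give an even-sized skew-symmetric block, whose corank is necessarily even --- it could never contribute the required corank $1$. A $2$-cycle of $\tau$ must contribute exactly $1$ to $\dim\ker M(D)$, but no $2\times 2$ skew-symmetric block has corank $1$. So no congruence of $M(D)$ to cycle-length blocks can exist.

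What the paper does instead, and what your sketch is missing, is Theorem~\ref{main2}: a transfer between two spaces of \emph{different} dimensions. One defines $\phi:\ker(P_\omega+P_\sigma)\to\ker(M(D))$ by $\phi(\vb{v})_i=\vb{v}_{\lef{i}}-\vb{v}_{\up{i}}$ and $\psi$ in the other direction by signed row and column sums, and checks via the pipe-dream combinatorics that both maps land in the right kernels and that $\psi\circ\phi=-2\,{\rm id}$, so both are injective and the kernels are isomorphic. The cycle decomposition of $\tau$ then diagonalizes the problem on the $(m+n)$-dimensional side, not on the white-square side: by Lemmas~\ref{technicallemma2} and~\ref{kernel}, $\vb{v}\in\ker(P_\omega+P_\sigma)$ iff $\vb{v}_b=-\vb{v}_{\tau(b)}$ for all $b$, which forces $\vb{v}$ to vanish on every odd-length cycle and leaves exactly one free alternating vector per even-length (i.e.\ odd) cycle. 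To salvage your plan you would need to replace the claimed block structure of $M(D)$ by some such transfer to the label space; the Pfaffian/parity argument alone cannot be made to work on $M(D)$ itself.
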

To clarify, the parity of a cycle is defined to be the parity of the number of inversions. Thus a cycle is odd if and only if it has even length. 

The crucial point in the proof of Theorem~\ref{maintheorem} is an isomorphism given in Theorem~\ref{main2} between the kernels of certain linear maps. Before describing this, we note that the authors~\cite{bcl}, and independently Yakimov \cite{yakimov2} with stronger hypotheses on $\mathbb{K}$ and $q$, have generalized the isomorphism to obtain a formula for calculating the dimension of torus-invariant strata in certain important subalgebras of the quantized enveloping algebra $U_q(\F{g})$ of a simple complex Lie algebra $\F{g}$. In this work we give a proof in the context of quantum matrices both for completeness and for its interesting combinatorial nature.

The proof of Theorem~\ref{maintheorem} depends on two parametrizations of the set of $\C{H}$-primes of $\Oq$. The first, due to Cauchon~\cite{cauchon1}, assigns to every $\C{H}$-prime a combinatorial object called a \emph{Cauchon diagram}. This is simply an $m\times n$ grid of squares coloured black or white according to the rule that if a square is black, then either all squares strictly above or all squares strictly to the left are also black. See Figure~\ref{cauchonexamples} for some examples. Interestingly, Cauchon diagrams appear independently in the work of Postnikov~\cite{postnikov} under the name \reflectbox{L}-diagrams (or ``le''-diagrams), as a parametrization of the totally nonnegative Grassmann cells of the totally nonnegative Grassmannian.  The connection between quantum matrices and total nonnegativity is detailed in papers of Goodearl, Launois and Lenagan~\cite{gll,gll2,gll3}.

The second parametrization of $\C{H}$-primes consist of the set of \emph{restricted ($m+n$)-permutations}, that is, permutations $\sigma\in S_{m+n}$ that satisfy $$-n\leq \sigma(i)-i\leq m$$ for all $i\in\{1,\ldots,m+n\}$. In~\cite{launois1} it has been shown that the set of restricted ($m+n$)-permutations ordered under the reverse Bruhat order is order-isomorphic to the poset of $\C{H}$-primes ordered by inclusion. This result was recently generalized by Yakimov \cite{yakimov}. A bijection between the set of Cauchon diagrams and the set of restricted permutations can be made using \emph{pipe dreams} (see Section~\ref{pipedreams}).

Bell and Launois~\cite{bldim} have shown that the dimension of a given $\C{H}$-stratum may be computed using the associated Cauchon diagram $D$. In particular, they show that one may construct a certain skew-symmetric matrix $M(D)$ from $D$ such that the dimension of the $\C{H}$-stratum is exactly $\dim(\ker(M(D))$. 

We let $\omega$ denote the maximum element in the poset of restricted $(m+n)$-permutations, and suppose that we have a Cauchon diagram $D$ whose corresponding restricted permutation is $\sigma$. The \emph{toric permutation} corresponding to $D$ is defined to be the permutation $\tau=\sigma\omega^{-1}$. In Theorem~\ref{main2} we construct an isomorphism between $\ker(M(D))$ and $\ker(P_\sigma+P_\omega)$, where $P_{\mu}$ is the matrix representation of a permutation $\mu$. As this latter space has dimension equal to the number of odd cycles of $\tau$, we obtain Theorem~\ref{maintheorem}.

As an application we are able to show in Corollary~\ref{enumerationcor} that the number of $d$-dimensional $\C{H}$-strata in $\Oq$ is the coefficient of $\frac{x^m}{m!}\frac{y^n}{n!}{t^d}$ in the power series expansion of $$(e^{-y}+e^{-x}-1)^\frac{-1-t}{2}(e^x+e^y-1)^\frac{1-t}{2}.$$ 

By determining the coefficients of this power series, we are able to settle several conjectures from~\cite{bll,bldim} concerning the asymptotic proportion of $d$-dimensional $\C{H}$-strata in $\Oq$. Namely, we prove in Theorem~\ref{egf2cor} that for fixed $m$ and $d$, the proportion of $d$-dimensional $\C{H}$-strata in $\Oq$ tends to $\frac{a(d)}{m!2^m}$ as $n\rightarrow\infty$, where $a(d)$ is the coefficient of $t^d$ in the polynomial $(t+1)(t+3)\cdots (t+2m-1)$.

\section{Preliminaries}
In this section, we give some basic background on quantum matrices, the Goodearl-Letzter stratification theory, and Cauchon diagrams.
\subsection{Quantum Matrices}
Throughout this paper, we set $\mathbb{K}$ to be a field, $q$ is a nonzero element of $\mathbb{K}$ that is not a root of unity, and we fix two positive integers $m$ and $n$. For a positive integer $\ell$, let $[\ell]:=\{1,2,\ldots, \ell\}$. 

\begin{defn} We let $\Oq$ denote the \emph{quantized coordinate ring of $m\times n$ matrices}. This is the algebra with generators $x_{i,j}$ for all $(i,j)\in [m]\times [n]$, subject to  the following relations:
\begin{enumerate}
\item For all $i\in[m]$ and $j,k\in[n]$ with $j<k$, $$x_{i,j}x_{i,k}=qx_{i,k}x_{i,j};$$
\item For all $j\in[n]$ and $i,\ell \in[m]$ with $i<\ell$, $$x_{i,j}x_{\ell,j}=qx_{\ell,j}x_{i,j};$$
\item For all $i,\ell\in [m]$ with $i<\ell$ and distinct $j,k\in [n]$:
\begin{displaymath}
 x_{i,j}x_{\ell,k} = \left\{\begin{array}{ll}
x_{\ell,k}x_{i,j}, & \textnormal{ if $j>k$;} \\
x_{\ell,k}x_{i,j} +(q-q^{-1})x_{i,k}x_{\ell,j}, & \textnormal{ if $j<k$.}
\end{array} \right.
\end{displaymath}
 \end{enumerate}
 \end{defn}
 
 The algebra $\Oq$ is colloquially known as \emph{the algebra of $m\times n$ quantum matrices}, or just \emph{quantum matrices}. It is well-known that $\Oq$ may be presented as an iterated Ore extension of the base field. We set $X$ to be the matrix of generators obtained by setting $X[i,j]:=x_{i,j}$. The collection of prime ideals of $\Oq$ is the \emph{prime spectrum} and is denoted by ${\rm spec}(\Oq)$. We endow the prime spectrum with the Zariski topology. As we assume that $q$ is not a root of unity, every prime ideal of $\Oq$ is completely prime \cite{GL2}.
 
 \subsection{$\C{H}$-Stratification Theory}
We wish to understand the structure of ${\rm spec}(\Oq)$. To this end, a useful tool is the $\C{H}$-stratification of Goodearl and Letzter. First notice that the algebraic torus $\C{H}=(\mathbb{K}^*)^{m+n}$ acts rationally by automorphisms on $\Oq$ as follows. If $$h=(\rho_1,\ldots,\rho_m,\gamma_1,\ldots,\gamma_n)\in\C{H},$$ then for all $(i,j)\in [m]\times [n]$ we set $$h\cdot x_{i,j} := \rho_i\gamma_jx_{i,j}.$$ 

 An ideal $J$ is an \emph{$\C{H}$-ideal} if $h\cdot J = J$ for all $h\in\C{H}$. An $\C{H}$-ideal $P$ is called an \emph{$\C{H}$-prime ideal} if, for any $\C{H}$-ideals $I$ and $J$, $IJ\subseteq P$ implies either $I\subseteq P$ or $J\subseteq P$. It can be shown that an $\C{H}$-prime ideal is, in fact, a prime ideal. The collection of all $\C{H}$-prime ideals is denoted by $\hspec$.

\begin{thm}[Goodearl and Letzter \cite{bg}]\label{Hstratification}
For the algebra $\C{A}=\Oq$, the following hold.
\begin{enumerate}
\item There are only finitely many $\C{H}$-prime ideals. 
\item
The set {\rm spec($\C{A}$)} can be partitioned into a disjoint union as follows:

$$\emph{spec}(\C{A}) = \bigcup_{J\in \mathcal{H}\text{\emph{-spec}}(\C{A})} Y_J,$$

where $$\displaystyle{Y_J:=\{P\in\text{\emph{spec}}(\mathcal{A}) \mid \bigcap_{h\in\mathcal{H}} h\cdot P = J\}}$$ is the \emph{$\C{H}$-stratum} associated to $J$. 
\item Each $\C{H}$-stratum is homeomorphic to the prime spectrum of a commutative Laurent polynomial ring over $\mathbb{K}$.
\item The primitive ideals of ${\rm spec}(\C{A})$ are precisely those ideals that are maximal within their $\C{H}$-stratum.
\end{enumerate}
\end{thm}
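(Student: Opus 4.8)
\section*{Proof proposal for Theorem~\ref{Hstratification}}

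The plan is to verify the hypotheses of the Goodearl--Letzter machinery for $\mathcal{A}=\Oq$ and then run the four assertions in turn. First I would record the structural input. As noted above, $\mathcal{A}$ is an iterated Ore extension of $\mathbb{K}$, hence left and right noetherian by the Ore-extension form of the Hilbert basis theorem; the ordered monomials in the $x_{i,j}$ form a $\mathbb{K}$-basis of $\mathcal{H}$-eigenvectors, so the action is rational and diagonalizable and yields a grading $\mathcal{A}=\bigoplus_{\lambda\in X(\mathcal{H})}\mathcal{A}_\lambda$ by the character lattice $X(\mathcal{H})\cong\mathbb{Z}^{m+n}$, with the $\mathcal{H}$-ideals being exactly the graded (homogeneous) ideals. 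Finally, since $q$ is not a root of unity, $\mathbb{K}$ is infinite, so $\mathcal{H}=(\mathbb{K}^*)^{m+n}$ is a connected algebraic group, and since every prime of $\mathcal{A}$ is completely prime, every $\mathcal{H}$-prime quotient is a domain.

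For part (1) I would first show that each $\mathcal{H}$-prime $J$ is genuinely prime: the radical $\sqrt J$ is again an $\mathcal{H}$-ideal with some power inside $J$, so $\mathcal{H}$-primeness forces $J$ to be semiprime; writing $J=P_1\cap\cdots\cap P_t$ as the (finite, by noetherianity) intersection of its minimal primes, the connected group $\mathcal{H}$ must fix each $P_i$ setwise, so each $P_i$ is an $\mathcal{H}$-ideal, and $P_1\cdots P_t\subseteq J$ then forces $J=P_i$ for some $i$. Finiteness itself I would obtain by noetherian induction: were it to fail, choose an $\mathcal{H}$-ideal $I$ maximal among those whose quotient has infinitely many $\mathcal{H}$-primes and pass to $B=\mathcal{A}/I$, so that $0$ is $\mathcal{H}$-prime in $B$ while every proper $\mathcal{H}$-quotient of $B$ has finitely many $\mathcal{H}$-primes; localizing $B$ at its nonzero homogeneous elements produces an $\mathcal{H}$-simple ring, and since every nonzero $\mathcal{H}$-prime of $B$ contains a nonzero homogeneous element, descending to proper quotients yields the contradiction. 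I expect this finiteness step to be the main obstacle, since it is the one genuinely global input; for $\Oq$ one could alternatively read it off from the injection of $\mathcal{H}\text{-spec}(\mathcal{A})$ into the manifestly finite set of $m\times n$ Cauchon diagrams.

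Part (2) is then essentially formal. For any prime $P$ the $\mathcal{H}$-core $(P:\mathcal{H})=\bigcap_{h\in\mathcal{H}}h\cdot P$ is the largest $\mathcal{H}$-ideal contained in $P$ and is $\mathcal{H}$-prime, so $P\mapsto(P:\mathcal{H})$ sorts each prime into exactly one $Y_J$; the strata are pairwise disjoint by construction and cover $\text{spec}(\mathcal{A})$, the index set being finite by part (1). For part (3), fix $J\in\mathcal{H}\text{-spec}(\mathcal{A})$ and let $\mathcal{E}_J$ be the nonzero homogeneous elements of the noetherian domain $\mathcal{A}/J$; these are regular, and one checks that they satisfy the Ore condition, so $A_J:=(\mathcal{A}/J)[\mathcal{E}_J^{-1}]$ exists and is $\mathcal{H}$-simple (a nonzero $\mathcal{H}$-ideal of $A_J$ contracts to a nonzero $\mathcal{H}$-ideal of $\mathcal{A}/J$, hence meets $\mathcal{E}_J$, now a unit). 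Since $A_J$ is an $\mathcal{H}$-simple noetherian domain, its centre $Z(A_J)$ is $\mathcal{H}$-stable, its nonzero homogeneous central elements are invertible, and the degree-zero part $k':=Z(A_J)^{\mathcal{H}}$ is a field extending $\mathbb{K}$; the weights of the central units form a free abelian group of some rank $s$, whence $Z(A_J)\cong k'[z_1^{\pm1},\ldots,z_s^{\pm1}]$ is a commutative Laurent polynomial ring. The localization correspondence identifies $Y_J$ with $\text{spec}(A_J)$, and contraction to the centre gives an inclusion-preserving bijection between $\text{spec}(A_J)$ and $\text{spec}(Z(A_J))$, so the composite $P\mapsto (PA_J)\cap Z(A_J)$ is the desired homeomorphism.

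Finally, part (4) follows from the Dixmier--Moeglin equivalence for $\mathcal{A}$. Within a fixed stratum the homeomorphism of part (3) transports the question to $Z(A_J)\cong k'[z_1^{\pm1},\ldots,z_s^{\pm1}]$, where the locally closed points are precisely the closed points, that is the maximal ideals, that is the maximal elements of the stratum. I would then invoke that $\mathcal{A}$ is a Jacobson algebra satisfying the Dixmier--Moeglin equivalence, so that a prime is primitive exactly when it is locally closed, and conclude that the primitive ideals are exactly those primes maximal within their $\mathcal{H}$-stratum. The point needing care is matching the two notions of ``locally closed'' across the homeomorphism, which is precisely where the explicit description of $Z(A_J)$ as a Laurent polynomial ring over a field is used.
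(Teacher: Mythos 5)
First, a point of reference: the paper does not prove this theorem at all --- it is quoted as background and attributed to Goodearl and Letzter (reference \cite{bg}) --- so your proposal can only be measured against the standard proof in the literature. Your parts (2)--(4) do reconstruct that proof correctly in outline: sorting primes by their $\mathcal{H}$-cores $(P:\mathcal{H})$, localizing $\mathcal{A}/J$ at the Ore set of nonzero homogeneous elements to get an $\mathcal{H}$-simple algebra $A_J$, identifying $Z(A_J)$ as a Laurent polynomial ring over the fixed field via the weight group of central homogeneous units, contracting to the centre for the homeomorphism, and feeding the Nullstellensatz/Dixmier--Moeglin machinery into (4). (One omission there: (4) also re-uses the finiteness from (1), since one needs each stratum $Y_J = V(J)\setminus \bigcup_{J'\supsetneq J} V(J')$ to be locally closed, which requires the union to be over finitely many $\mathcal{H}$-primes.)

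The genuine gap is your main argument for part (1). Finiteness of $\mathcal{H}$-spec is \emph{not} a formal consequence of noetherianity plus a rational torus action, and your noetherian induction cannot be repaired. Concretely, take $\mathcal{A}=\mathbb{K}[x,y]$ commutative, with $\mathcal{H}=\mathbb{K}^*$ acting by $h\cdot x = hx$ and $h\cdot y = hy$. The action is rational, $(0)$ is an $\mathcal{H}$-prime, every nonzero $\mathcal{H}$-prime contains a nonzero homogeneous element, and every proper $\mathcal{H}$-prime quotient has only finitely many $\mathcal{H}$-primes --- yet the ideals $(x-\lambda y)$ for $\lambda\in\mathbb{K}$ form an infinite family of distinct $\mathcal{H}$-primes. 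This satisfies every intermediate conclusion of your induction while violating its conclusion; the step that breaks is precisely ``descending to proper quotients yields the contradiction'': each nonzero $\mathcal{H}$-prime contains \emph{some} nonzero homogeneous $c$ and hence survives in $B/(c)$, but infinitely many different $c$'s are needed, so the finitely-many-per-quotient bounds never aggregate into a bound for $B$ itself. Finiteness genuinely uses the specific structure of $\Oq$: the Goodearl--Letzter argument inducts along the iterated Ore extension tower (the CGL framework), showing at each step that an $\mathcal{H}$-prime either contains the new generator or is controlled after inverting it, which bounds the number of $\mathcal{H}$-primes by $2^{mn}$; the hypotheses on $q$ and on the torus eigenvalues enter essentially. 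Your fallback --- reading finiteness off Cauchon's parametrization by diagrams, Theorem~\ref{cauchonsthm} of the paper --- is legitimate and non-circular (Cauchon's deleting-derivations theory uses the stratification of parts (2)--(3), not finiteness), but it is a far deeper input than the induction you sketch, and invoking it concedes that the elementary argument proposed does not exist.
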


The \emph{dimension} of an $\C{H}$-stratum $Y_J$ is the Krull dimension of the commutative Laurent polynomial ring for which $Y_J$ is homeomorphic to by Theorem~\ref{Hstratification}(3). In other words, the dimension of $Y_J$ is the length $d$ of the longest chain $P_0\subset P_1\subset\cdots\subset P_d$ of prime ideals contained in $Y_J$.

\subsection{Cauchon Diagrams}
The deleting-derivations algorithm due to Cauchon and applied to the algebra $\Oq$ allows one to obtain a nice combinatorial parametrization of $\hspec$. 

\begin{defn}
An \emph{$m\times n$ diagram} is simply an $m\times n$ grid of squares, each square coloured either white or black. A diagram is a \emph{Cauchon diagram} if the colouring has the property that if a square is black, then either every square strictly above or every square strictly to the left is also black.
\end{defn}
\begin{figure}[htbp]
\begin{center}
\includegraphics[width=3.5in]{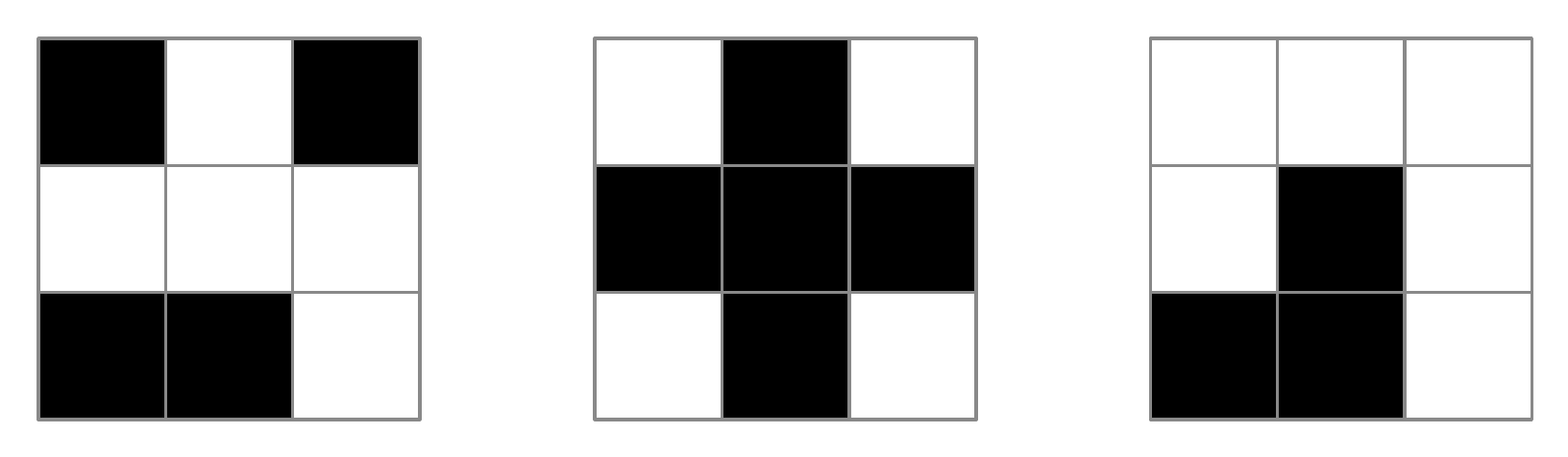}
\caption{Three $3\times 3$ diagrams. The left and center diagrams are Cauchon; the right diagram is not.}
\label{cauchonexamples}
\end{center}
\end{figure}

\begin{thm}[Cauchon~\cite{cauchon2}] \label{cauchonsthm}
For every $m$ and $n$, $\hspec$ is in bijective correspondence with the collection of $m\times n$ Cauchon diagrams.
\end{thm}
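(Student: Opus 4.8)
The plan is to realize the bijection through Cauchon's deleting-derivations algorithm, passing from $\Oq$ to an associated quantum affine space whose $\C{H}$-spectrum is transparently combinatorial. First I would verify that $\Oq$, presented as an iterated Ore extension $\mathbb{K}[x_{1,1}][x_{1,2};\sigma,\delta]\cdots$ with the generators adjoined in, say, lexicographic order, satisfies the hypotheses needed to run the algorithm: each $\sigma$ is a torus eigen-automorphism, each $\delta$ is a locally nilpotent $\sigma$-derivation, and the relevant $\sigma$-eigenvalues are nontrivial powers of $q$ (hence not $1$, since $q$ is not a root of unity). This places $\Oq$ in the setting of Cauchon's theory and, crucially, the torus $\C{H}=(\mathbb{K}^*)^{m+n}$ acts compatibly at every stage.

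Running the algorithm then produces a quantum affine space $\overline{\C{A}}=\mathbb{K}_\Lambda[t_{i,j}\mid (i,j)\in[m]\times[n]]$ together with Cauchon's canonical embedding, an injection $\phi$ from $\mathrm{spec}(\Oq)$ into $\mathrm{spec}(\overline{\C{A}})$ that restricts to an injection on $\C{H}$-spectra. The payoff is that the $\C{H}$-primes of a quantum affine space are completely understood: each is generated by a subset of the variables, so $\hspec(\overline{\C{A}})$ is in natural bijection with the collection of all $m\times n$ diagrams, a black square at $(i,j)$ recording that $t_{i,j}$ lies in the prime. Thus the theorem reduces to a single combinatorial assertion: the image of $\phi$ on $\C{H}$-primes is exactly the set of Cauchon diagrams. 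That this image is a proper subset is already visible for $m=n=2$, where $\Oq$ has $14$ such primes while the quantum affine space has $2^4=16$.

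The hard part will be this image characterization, and it is where the $2\times 2$ quantum-minor relation $x_{i,j}x_{\ell,k}=x_{\ell,k}x_{i,j}+(q-q^{-1})x_{i,k}x_{\ell,j}$ enters decisively. The deletion step that removes the derivation attached to a corner $(\ell,c)$ changes the remaining generators, in a suitable localization, by $x_{i,j}\mapsto x_{i,j}-x_{i,c}\,x_{\ell,c}^{-1}\,x_{\ell,j}$ for $i<\ell$ and $j<c$. I would track, inductively over the steps of the algorithm, whether each successive generator is carried into or outside a fixed $\C{H}$-prime, and show that the Schur-complement shape of this substitution forces precisely the dichotomy defining a Cauchon diagram: if the square at $(\ell,c)$ is black, then either the correction term is unavailable because the entire column strictly above is already black, or it is annihilated because the entire row strictly to the left is already black. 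Conversely, I would construct, for each Cauchon diagram, a compatible $\C{H}$-prime by reversing the algorithm step by step via the explicit inverse change of variables. Establishing that these two directions are mutually inverse---equivalently, that the combinatorial ``all-above-or-all-left'' rule is exactly the obstruction detected by the canonical embedding---is the technical crux; the finiteness of $\hspec$ from Theorem~\ref{Hstratification}(1) furnishes a useful consistency check but not the bijection itself.
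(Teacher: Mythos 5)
The paper does not prove this statement: Theorem~\ref{cauchonsthm} is quoted as background from Cauchon's paper \cite{cauchon2}, so there is no internal proof to compare against. Judged on its own terms, your proposal correctly identifies the architecture of Cauchon's actual argument --- present $\Oq$ as an iterated Ore extension satisfying the hypotheses of the deleting-derivations algorithm, pass via the canonical embedding to a quantum affine space whose $\C{H}$-primes are generated by subsets of the variables (hence indexed by arbitrary $m\times n$ diagrams), and then characterise the image. Your $2\times 2$ sanity check ($14$ versus $16$) and the Schur-complement form of the change of variables $x_{i,j}\mapsto x_{i,j}-x_{i,c}x_{\ell,c}^{-1}x_{\ell,j}$ are both right.

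The genuine gap is the step you yourself flag as the crux: you do not actually prove that the image of the canonical embedding on $\C{H}$-primes is exactly the set of Cauchon diagrams, and the heuristic you offer for the ``all above or all left'' dichotomy is not yet an argument. The difficulty is that membership of a generator in an $\C{H}$-prime is not determined square-by-square from the substitution formula; one must track, through all $mn$ steps of the algorithm and the attendant localizations, how the ideal transforms, and show (i) that if $x_{\ell,c}$ eventually lies in the image prime then either all the variables strictly above it or all those strictly to its left do too, and (ii) conversely that every Cauchon diagram is realised, which requires exhibiting an $\C{H}$-prime of $\Oq$ mapping to it (Cauchon does this by an explicit inductive construction running the algorithm backwards; it cannot be read off from finiteness of $\hspec$). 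These two inductions constitute the bulk of \cite{cauchon2}, so as written your proposal is a correct roadmap rather than a proof.
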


Given an $m\times n$ Cauchon diagram $D$ with $N$ white squares, it is convenient to label the white squares by the elements of $[N]$. Given such a labelling, we construct an $N\times N$ skew symmetric matrix $M(D)$ by the rule
\begin{displaymath}
 M(D)[i,j]= \left\{\begin{array}{ll}
1 & \textnormal{if square $i$ is strictly below or strictly to the right of square $j$,} \\
-1 & \textnormal{if square $i$ is strictly above or strictly to the left of square $j$, and}\\
0 & \textnormal{otherwise.}
\end{array} \right.
\end{displaymath}

The analysis of $M(D)$ has proven crucial in the study of the dimension of $\C{H}$-strata due to the following theorem.

\begin{thm}[\cite{bldim}] \label{BL}
Let $J$ be an $\C{H}$-prime ideal with corresponding Cauchon diagram $D$. The dimension of the $\C{H}$-stratum containing $J$ is $\dim(\ker(M(D))$.
\end{thm}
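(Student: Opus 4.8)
The plan is to realize the $\C{H}$-stratum $Y_J$ explicitly as the prime spectrum of the centre of a quantum torus attached to the white squares of $D$, and then to compute the Krull dimension of that centre by a purely linear-algebraic kernel computation. The guiding principle is that, although $\Oq$ itself is far from commutative, the Goodearl--Letzter description (Theorem~\ref{Hstratification}(3)) tells us that $Y_J$ is homeomorphic to $\mathrm{spec}$ of a commutative Laurent polynomial ring, and this ring always arises as the centre of a suitable localisation of $\C{A}/J$. The dimension of $Y_J$ is then simply the number of Laurent variables, so everything reduces to counting central monomials.

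First I would invoke Cauchon's deleting-derivations algorithm, which transports the study of $\C{H}$-primes of $\Oq$ to the quantum affine space $\C{O}_q(\mathbb{K}^{mn})$ on variables $t_{i,j}$ indexed by the squares of the grid, compatibly with the $\C{H}$-action and hence with the stratification. Under the bijection of Theorem~\ref{cauchonsthm}, the $\C{H}$-prime $J$ with diagram $D$ corresponds to the completely prime ideal generated by the black-square variables, so that the relevant factor is the quantum affine space on the white variables $W=\{1,\dots,N\}$ alone. Its stratum is homeomorphic to $\mathrm{spec}$ of the centre of the quantum torus $T_W$ obtained by inverting all the white variables. The hard part will be this transport step: one must check that the deleting-derivations procedure preserves the stratum structure faithfully, so that the dimension of $Y_J$ really equals the Krull dimension of $Z(T_W)$, rather than just tracking the $\C{H}$-primes set-theoretically.

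Next I would identify the commutation data of $T_W$ with the matrix $M(D)$. A direct inspection of which quantum-matrix relations survive the algorithm shows that two white variables commute when their squares lie in different rows and different columns, while squares sharing a row or a column $q$-commute with exponent $+1$ or $-1$ according to whether one is below/right or above/left of the other; this is exactly the rule defining $M(D)$. Thus $T_W$ is the quantum torus $\mathbb{K}_q[\Lambda]$ with skew-symmetric exponent matrix $\Lambda=M(D)$, meaning $t_i t_j = q^{M(D)[i,j]} t_j t_i$.

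Finally I would compute the centre. A Laurent monomial $t_1^{a_1}\cdots t_N^{a_N}$ commutes with $t_i$ precisely when $q^{\sum_j M(D)[i,j]a_j}=1$; since $q$ is not a root of unity this forces $\sum_j M(D)[i,j]a_j=0$ for every $i$, that is, $(a_1,\dots,a_N)\in\ker M(D)$. Hence the central monomials form a free abelian group isomorphic to $\ker_{\mathbb{Z}} M(D)$, and $Z(T_W)$ is a commutative Laurent polynomial ring in $\mathrm{rank}_{\mathbb{Z}}(\ker M(D))=\dim_{\mathbb{Q}}(\ker M(D))$ variables. Its Krull dimension is therefore $\dim(\ker M(D))$, and combining this with the homeomorphism established in the first step yields $\dim Y_J=\dim(\ker M(D))$, as claimed.
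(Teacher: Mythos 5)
The paper does not prove this theorem itself --- it is quoted from \cite{bldim} --- and your proposal reconstructs essentially the argument given there: pass through Cauchon's deleting-derivations algorithm to the quantum affine space on the white squares, identify the stratum with the spectrum of the centre of the associated quantum torus, and compute that centre as a Laurent polynomial ring in $\dim(\ker M(D))$ variables using that $q$ is not a root of unity. Your outline is correct, including your identification of the one genuinely nontrivial input (Cauchon's result that the canonical embedding preserves each stratum up to homeomorphism, not merely the set of $\C{H}$-primes), which is exactly the ingredient the cited proof relies on.
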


\section{Pipe Dreams and Permutations} \label{pipedreams}

In this section, we describe the \emph{pipe dreams} construction, which gives a bijection between Cauchon diagrams and restricted permutations.

Let us call a permutation $\sigma$ of $[m+n]$ \emph{restricted} if, for all $i\in[m+n]$, we have $-n\leq \sigma(i)-i\leq m$. The set of all restricted permutations of $[m+n]$ is a subposet of the symmetric group of $[m+n]$ endowed with the Bruhat order \cite{launois1}. Moreover we have the following result that was proved in \cite{launois1}.

\begin{thm}\label{orderisomorphic}
For fixed $m$ and $n$, the poset $\hspec$, ordered by inclusion, is order-isormophic to the poset of restricted permutations of $[m+n]$, ordered by the Bruhat order.
\end{thm}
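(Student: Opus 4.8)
The plan is to realize the claimed order-isomorphism as a composite of two bijections and then verify that it respects the two orders. First I would combine Cauchon's parametrization (Theorem~\ref{cauchonsthm}), which identifies $\hspec$ with the set of $m\times n$ Cauchon diagrams, with the pipe-dream construction of Section~\ref{pipedreams}, which identifies Cauchon diagrams with restricted permutations of $[m+n]$. Writing $\Phi\colon \hspec \to \{\text{restricted permutations}\}$ for the resulting bijection, the theorem reduces to showing that $J \subseteq J'$ if and only if $\Phi(J) \le \Phi(J')$ in the Bruhat order. Since both posets are finite, it suffices to prove that $\Phi$ and $\Phi^{-1}$ are both order-preserving, and for this I would pass through the covering relations of each poset.

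Second, I would extract a purely combinatorial description of inclusion among $\mathcal{H}$-primes. Cauchon's deleting-derivations algorithm relates a suitable localization of $\Oq$ to a quantum affine space, whose $\mathcal{H}$-primes are generated by subsets of the variables; pulling these back recovers the diagram parametrization and, more importantly, records how inclusions behave under the algorithm. The goal of this step is a criterion for when $J_D \subseteq J_{D'}$, and in particular a description of the covering relations of $(\hspec,\subseteq)$ as local modifications of Cauchon diagrams. This is where the algebraic input enters, but once the algorithm's interaction with inclusion is pinned down it is largely bookkeeping.

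Third, I would match these covers with Bruhat covers through the pipe-dream map, recalling that $\sigma \lessdot \tau$ exactly when $\tau = \sigma(a\,b)$ for a transposition with $\ell(\tau) = \ell(\sigma)+1$. The task is to check that each elementary diagram modification from the previous step translates, under pipe dreams, into precisely such a length-raising transposition of the associated restricted permutation, and conversely. To upgrade this local matching to the full biconditional, I would identify a common rank function --- the number of inversions $\ell(\sigma)$ on the permutation side and a statistic read off the wiring diagram on the diagram side --- show that $\Phi$ preserves it, and argue that both posets are graded by it, so that comparabilities are generated by saturated chains of covers.

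The hard part will be this third step. The pipe-dream map is inherently non-local: a single pipe threads across the whole grid, so a one-square change in a Cauchon diagram can reroute pipes globally, and one must show that the net effect on the permutation is exactly one length-raising transposition. Controlling this rerouting, while simultaneously verifying that the restricted permutations really do form a \emph{graded} subposet of $S_{m+n}$ under the Bruhat order, is the delicate combinatorial core. A secondary subtlety is orientation: one must fix whether ideal inclusion corresponds to the Bruhat order or its reverse and choose the pipe-dream encoding conventions accordingly, since the two directions differ only by the longest-element twist $\sigma \mapsto \sigma\omega^{-1}$.
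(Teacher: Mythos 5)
This theorem is not proved in the paper at all: it is imported verbatim from \cite{launois1}, so there is no internal argument to compare against. Judged on its own terms, your proposal is a plausible outline but it has two genuine gaps, both of which you partially acknowledge but neither of which is ``largely bookkeeping.'' First, step 2 assumes that the deleting-derivations algorithm hands you a combinatorial criterion for when $J_D\subseteq J_{D'}$. It does not, at least not directly: Cauchon's canonical embedding is defined stratum by stratum (one localisation per diagram), and it identifies each individual $\mathcal{H}$-prime with a prime of a quantum affine space, but it gives no a priori way to compare two $\mathcal{H}$-primes living over \emph{different} diagrams. Extracting the inclusion order is precisely the hard algebraic content of \cite{launois1}, and the actual route taken there is different from yours: one shows that each $\mathcal{H}$-prime is generated by the quantum minors it contains, describes explicitly which quantum minors lie in the $\mathcal{H}$-prime attached to a given restricted permutation, and then matches containment of these generating sets with the Bruhat order using a minor-theoretic criterion for Bruhat comparability. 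No analysis of covering relations or of local diagram modifications is needed on that route, which sidesteps exactly the ``delicate combinatorial core'' you identify in step 3.

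Second, step 3 itself is left entirely undone: you correctly observe that a one-square change in a Cauchon diagram can reroute pipes globally, and that one must also verify gradedness of the restricted permutations as a subposet of $(S_{m+n},\le_{\mathrm{Bruhat}})$, but you offer no argument for either point; as written this is a statement of what would need to be proved rather than a proof. Your remark about orientation is well taken and is in fact a live issue in the paper itself: the introduction states the isomorphism with the \emph{reverse} Bruhat order while Theorem~\ref{orderisomorphic} says ``the Bruhat order,'' and any completed proof must commit to one convention (equivalently, decide whether the isomorphism is realised by $\sigma$ or by $\sigma\omega^{-1}$-type twisting). In summary: the skeleton is reasonable, but the load-bearing steps are missing, and the known proof proceeds by comparing quantum-minor generating sets rather than by matching covers of diagrams with Bruhat covers.
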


The connection between Cauchon diagrams and restricted permutations suggested by Theorems \ref{cauchonsthm} and \ref{orderisomorphic} can be made clear via a notion called \emph{pipe dreams}. To explain this idea, let us fix an $m\times n$ diagram $D$. 
We lay ``pipes'' on the squares of $D$ by placing a ``hyperbola'' on every white square and a ``cross'' on every black square (see Figure~\ref{standard}). Next, we label the sides of the diagram by elements of $[m+n]$ as in, for example, Figure~\ref{standard} (we assume the general form of the labelling is obvious from this consideration). 
\begin{figure}[htbp]
\begin{center}
\includegraphics[width=2.5in]{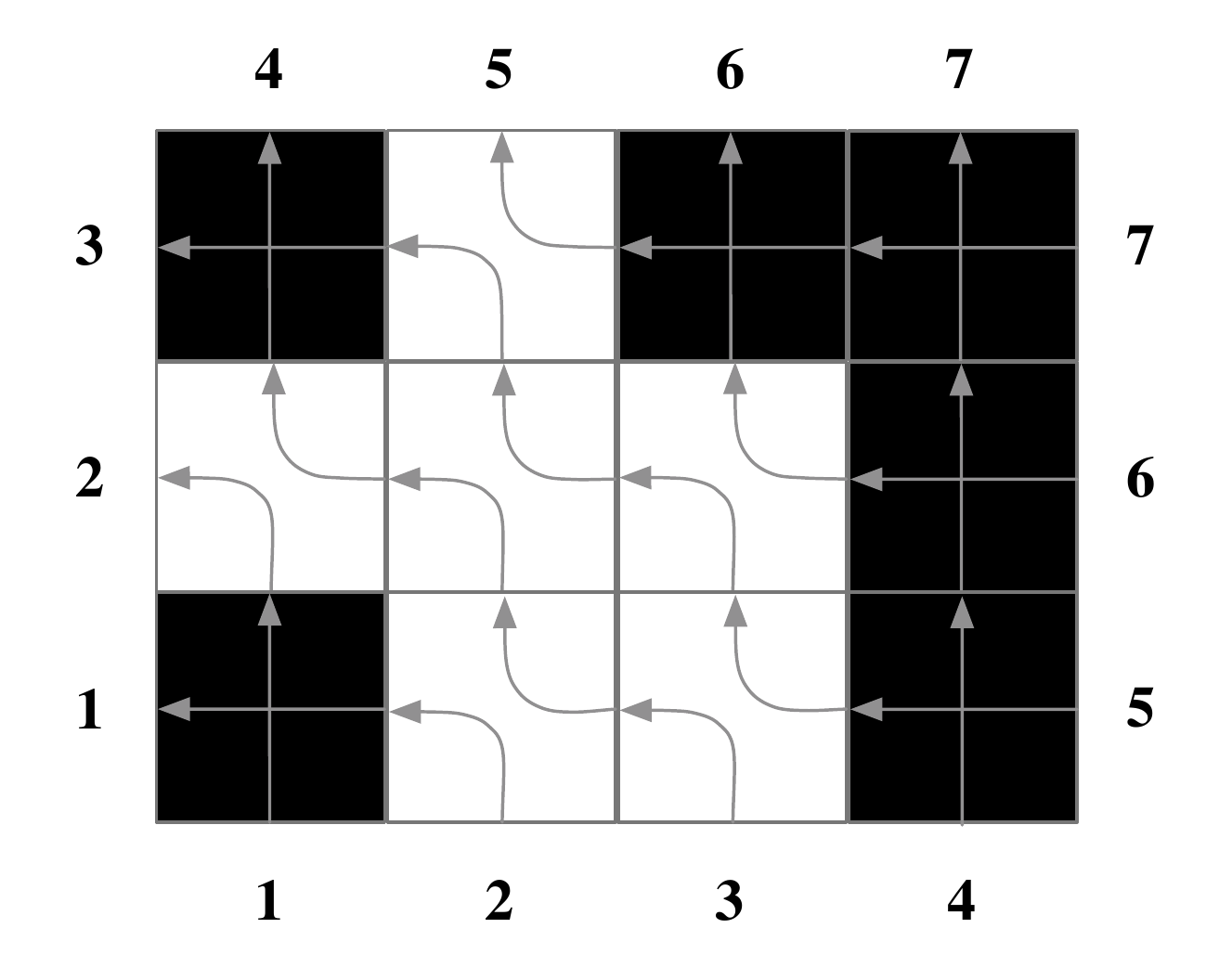}
\caption{Example of applying pipe dreams to a given diagram}
\label{standard}
\end{center}
\end{figure}
 
The restricted permutation $\sigma$ is obtained from this process by defining $\sigma(i)$ to be the label (on the left or top side of $D$) reached by following the pipe starting at label $i$ (on the bottom or right side of the $D$). To be clear, when following a pipe through a black square, we always go straight through the square.

For example, in the diagram of Figure~\ref{standard}, we obtain the restricted permutation $(1\,2)(\,3\,4\,7\,5)$. Notice that the inverse of the permutation is obtained by reversing this procedure, that is, starting at a label from the left or top and following the pipe to the bottom or right of the diagram.

\begin{defn} \label{omega}
We denote by $\omega$ the restricted $(m+n)$-permutation obtained from the $m\times n$ diagram consisting of only black squares. Therefore, 
\begin{displaymath}
\omega(i) = \left\{\begin{array}{ll}
m+i & \textnormal{if $1\leq i\leq n$,} \\
i-n & \textnormal{if $n+1\leq i\leq n+m$.}
\end{array} \right. 
\end{displaymath}The permutation $\omega$ is the maximum element in the set of restricted $(m+n)$-permutations ordered by the reverse Bruhat order. 
\end{defn}

\section{Dimension of the $\C{H}$-strata}
In this section we prove Theorem \ref{maintheorem}.
\subsection{Toric Permutations}
We introduce the notion of \emph{toric permutations}, which are essential in our analysis.
\begin{defn} \label{torusdef}
Given a diagram $D$ with restricted permutation $\sigma$, the \emph{toric permutation} associated to $D$ is simply the permutation $\tau=\sigma\omega^{-1}$.
\end{defn}

The toric permutation can be obtained via pipe dreams simply by following the same procedure except the bottom and right sides of the diagram are now relabelled as in Figure~\ref{pipes}. Notice that under this labelling, any cycle in the toric permutation may be found by considering the diagram as a ``torus'' and following the pipe around the ``torus''.
\begin{figure}[htbp]
\begin{center}
\includegraphics[width=2.5in]{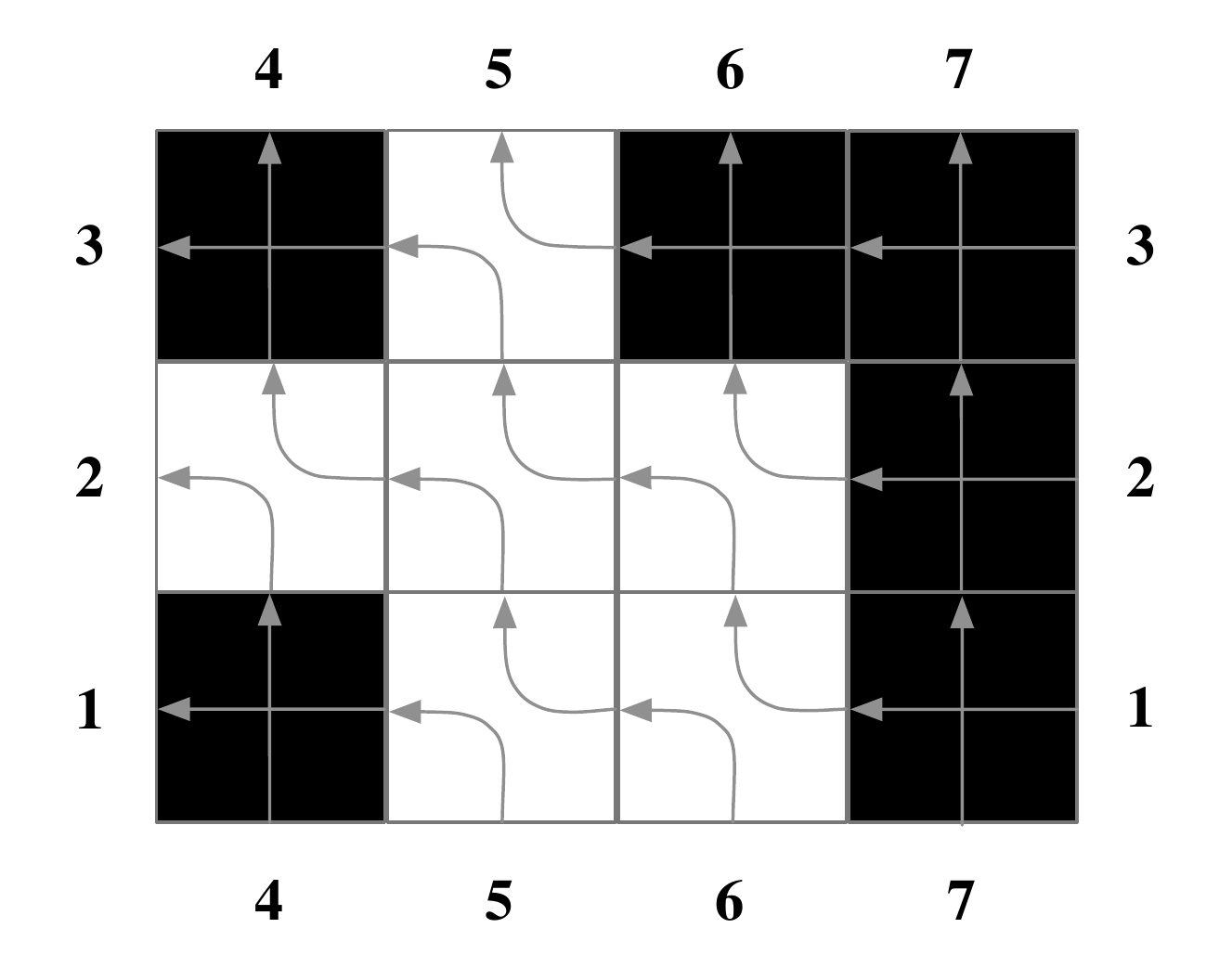}
\caption{New row/column labelling to obtain the toric permutation $(1\,3\,5)(2\,6\,4)(7)$}
\label{pipes}
\end{center}
\end{figure}

The proof of Theorem~\ref{maintheorem} requires the following two lemmas. For a permutation $\mu\in S_k$, $P_\mu$ denotes the corresponding $k\times k$ permutation matrix. That is, $P_\mu$ is the  $k\times k$ matrix whose entries are defined by $P_\mu[i,j]:= \delta_{j,\sigma(i)}$, where $\delta$ denotes the Kronecker symbol. 

\begin{lem} \label{technicallemma2}
Let $D$ be an $m\times n$ diagram whose corresponding restricted permutation is $\sigma$. If $\tau=\sigma\omega^{-1}$ is the toric permutation associated to $D$, then $\vb{v}\in\ker(P_{\omega}+P_{\sigma})$ if and only if $\vb{v}_{b}=-\vb{v}_{\tau(b)}$ for every label $b$, where $\vb{v}_i$ denotes the $i$th coordinate of $\vb{v}$.
\end{lem}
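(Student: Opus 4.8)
The plan is to unwind the definition of the permutation matrices and read off the kernel condition coordinate by coordinate. First I would record how $P_\mu$ acts on a column vector: from the defining rule $P_\mu[i,j]=\delta_{j,\mu(i)}$ one computes
$$(P_\mu\vb{v})_i = \sum_j P_\mu[i,j]\,\vb{v}_j = \vb{v}_{\mu(i)},$$
so that the $i$th coordinate of $P_\mu\vb{v}$ is simply $\vb{v}_{\mu(i)}$. This elementary identity is the only real ingredient.

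Next I would apply this to both summands. The vector $\vb{v}$ lies in $\ker(P_\omega+P_\sigma)$ precisely when, for every index $i\in[m+n]$, the $i$th coordinate of $(P_\omega+P_\sigma)\vb{v}$ vanishes, that is
$$\vb{v}_{\omega(i)} + \vb{v}_{\sigma(i)} = 0 \qquad\text{equivalently}\qquad \vb{v}_{\omega(i)} = -\vb{v}_{\sigma(i)}$$
for all $i$. Since each coordinate equation is independent, membership in the kernel is equivalent to the entire system holding simultaneously.

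Finally I would perform the change of variable $b=\omega(i)$. Because $\omega$ is a bijection of $[m+n]$, letting $i$ range over all indices is the same as letting $b$ range over all labels, and $i=\omega^{-1}(b)$. Substituting yields $\vb{v}_b = -\vb{v}_{\sigma(\omega^{-1}(b))}$ for every label $b$; since $\tau=\sigma\omega^{-1}$ by Definition~\ref{torusdef}, the right-hand side is exactly $-\vb{v}_{\tau(b)}$, giving the asserted condition. As every step above is an equivalence, this establishes both implications at once.

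The computation is short; the only delicate point is to keep the conventions straight. Specifically, one must verify that $P_\mu$ acts by $(P_\mu\vb{v})_i=\vb{v}_{\mu(i)}$ rather than by $\mu^{-1}$, and apply the substitution $b=\omega(i)$ to the correct argument so that the composition $\sigma\circ\omega^{-1}$ (and not $\omega^{-1}\circ\sigma$) emerges. Getting this ordering right is precisely what guarantees that the toric permutation $\tau=\sigma\omega^{-1}$, rather than its inverse, is the one that governs the kernel, and hence is the crucial bookkeeping step despite the brevity of the argument.
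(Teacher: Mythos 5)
Your argument is correct and is essentially the paper's own proof: the paper likewise reads off the kernel condition as $\vb{v}_{\omega(a)}+\vb{v}_{\sigma(a)}=0$ for all $a$ and then substitutes $a=\omega^{-1}(b)$ to obtain $\vb{v}_b=-\vb{v}_{\tau(b)}$. You simply make explicit the computation $(P_\mu\vb{v})_i=\vb{v}_{\mu(i)}$ that the paper leaves implicit.
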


\begin{proof}
Consider the pipe in $D$ corresponding to the cycle in $\tau$ containing $b$ and $\tau(b)$. Now $\vb{v}\in\ker(P_{\omega}+P_{\sigma})$ if and only if for all $a$ we have $\vb{v}_{\omega(a)}+\vb{v}_{\sigma(a)} = 0$. Taking $a=\omega^{-1}(b)$ we obtain $\vb{v}_{b}=-\vb{v}_{\tau(b)}$ as desired.
\end{proof}
\begin{lem} \label{kernel}
Let $D$ be an $m\times n$ diagram whose corresponding restricted permutation is $\sigma$, and let $\tau$ denote the toric permutation $\tau=\sigma\omega^{-1}$.  Then the dimension of $\ker(P_\omega+P_{\sigma})$ is the number of odd cycles in the disjoint-cycle decomposition of $\tau$. 
\end{lem}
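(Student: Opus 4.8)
The plan is to reduce the computation of $\dim\ker(P_\omega+P_\sigma)$ to a parity analysis of the individual cycles of $\tau$, using the characterization already supplied by Lemma~\ref{technicallemma2}. That lemma tells us that a vector $\vb{v}$ lies in $\ker(P_\omega+P_\sigma)$ precisely when $\vb{v}_b=-\vb{v}_{\tau(b)}$ for every label $b$. Since each such constraint relates only the two coordinates $\vb{v}_b$ and $\vb{v}_{\tau(b)}$, which lie in a common $\tau$-orbit, the full system of constraints respects the partition of the coordinates into $\tau$-cycles. Consequently the ambient space decomposes as a direct sum over the disjoint cycles of $\tau$, the constraints decouple along this decomposition, and the kernel is the direct sum of the per-cycle solution spaces. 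It therefore suffices to determine, for a single cycle, the dimension of the space of vectors supported on that cycle and satisfying the relation.

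So first I would fix a cycle $(b_0\,b_1\,\cdots\,b_{\ell-1})$ of $\tau$, with $\tau(b_i)=b_{i+1}$ and indices read modulo $\ell$. Iterating the relation $\vb{v}_{b_{i+1}}=-\vb{v}_{b_i}$ from $\vb{v}_{b_0}$ gives $\vb{v}_{b_i}=(-1)^i\vb{v}_{b_0}$ for $0\le i\le \ell-1$, so every coordinate on the cycle is determined by the single scalar $\vb{v}_{b_0}$. Next I would impose the closing-up condition coming from $\tau(b_{\ell-1})=b_0$, namely $\vb{v}_{b_0}=-\vb{v}_{b_{\ell-1}}=-(-1)^{\ell-1}\vb{v}_{b_0}=(-1)^{\ell}\vb{v}_{b_0}$. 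When $\ell$ is even this identity holds for every value of $\vb{v}_{b_0}$, so the cycle contributes exactly one free parameter and hence one dimension; when $\ell$ is odd it forces $\vb{v}_{b_0}=0$, so $\vb{v}$ vanishes on the entire cycle and the contribution is zero.

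Summing over all cycles, $\dim\ker(P_\omega+P_\sigma)$ equals the number of even-length cycles of $\tau$. By the parity convention recorded just after Theorem~\ref{maintheorem}, a cycle is odd exactly when its length is even, so this count is precisely the number of odd cycles in the disjoint-cycle decomposition of $\tau$, which is the assertion. The only step requiring any care is the closing-up computation, where the sign $(-1)^{\ell}$ must be tracked correctly around the cycle; beyond that, the argument is a direct consequence of Lemma~\ref{technicallemma2} together with the decoupling of the constraints across cycles, so I do not anticipate a genuine obstacle.
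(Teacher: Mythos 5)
Your proposal is correct and follows essentially the same route as the paper's proof: both arguments rest on the characterization $\vb{v}_b=-\vb{v}_{\tau(b)}$ from Lemma~\ref{technicallemma2}, decompose the kernel over the disjoint cycles of $\tau$, and observe that each even-length (i.e.\ odd) cycle contributes one free parameter while each odd-length cycle forces the corresponding coordinates to vanish. Your explicit closing-up computation $\vb{v}_{b_0}=(-1)^{\ell}\vb{v}_{b_0}$ is just a more detailed rendering of the paper's construction of the alternating $\pm 1$ basis vectors $\vb{v}^{\gamma}$.
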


\begin{proof}
Let $\{\gamma_1,\ldots,\gamma_\ell\}$ be the set of odd cycles in the disjoint cycle decomposition of $\sigma\omega^{-1}$.
Given the odd cycle $\gamma=(a_1\, a_2\,\ldots\, a_{2k})$, define the vector $\vb{v}^\gamma \in \mathbb{Z}^{m+n}$ by
\begin{displaymath}
\vb{v}^\gamma_b = \left\{\begin{array}{ll}
1 & \textnormal{if $b=a_i$ and $i$ is odd,} \\
-1 & \textnormal{if $b=a_i$ and $i$ is even,}\\
0 & \textnormal{otherwise.}
\end{array} \right.
\end{displaymath}
We claim that the set $B=\{\vb{v}^{\gamma_i} \mid i\in [\ell]\}$ forms a basis for $\ker(P_{\omega}+P_{\sigma})$. Since the odd cycles are mutually disjoint, it is clear that the members of $B$ form an independent set in the $\mathbb{Z}$-module $\mathbb{Z}^{m+n}$. 

Suppose that $\vb{v}\in \ker(P_{\omega}+P_{\sigma})$.  By Lemma~\ref{technicallemma2}, we know that if $b$ is in an even cycle of $\tau$, then $\vb{v}_b=0$. Moreover, the values of the entries corresponding to an odd cycle agree up to multiplication by $-1$. Thus we see that $\vb{v}$ can be written as a linear combination of elements of $B$.
\end{proof}

\subsection{Proof of Theorem~\ref{maintheorem}}
\begin{notn}
Fix an $m\times n$ diagram $D$ with $N$ white squares labelled by distinct elements of the set $[N]$ such that labels are strictly increasing from left to right along rows and if $i < j$ then the label of each white box in row $i$ is strictly less than the label of each white box in row $j$. Let $\tau=\sigma\omega^{-1}$ be the toric permutation of $D$. Let $\vb{w}$ be in the column space of $M(D)$ and $\vb{v}$ in the column space of $P_\omega+P_\sigma$. We refer to the entries of $\vb{w}$ by $\vb{w}_j$ for $j\in [N]$ and the entries of $\vb{v}$ by $\vb{v}_a$ where $a\in [m+n]$.
\begin{enumerate}
\item Since, in the toric labelling, each side of a row or column is given the same label, we may unambiguously refer to a row or column by this label. 
\item Given a diagram $D$ with the toric labelling and a white square $i$ of $D$, let $\lef{i}$ and $\up{i}$ be, respectively, the labels of the rows or columns reached by following the bottom and top pipes of the hyperbola pipe placed on $i$. See Example \ref{tne}.
\item For $S\subseteq [N]$, let $\vb{w}_S = \sum_{j\in S}\vb{w}_j$. 
\item For a given white square $i$, let $A(i), R(i), B(i)$ and $L(i)$ be the sets of white squares that are, respectively, strictly above, strictly to the right, strictly below and strictly to the left of square $i$.
\end{enumerate}
\end{notn}

\begin{figure}[htbp]
\begin{center}
\includegraphics[width=2.5in]{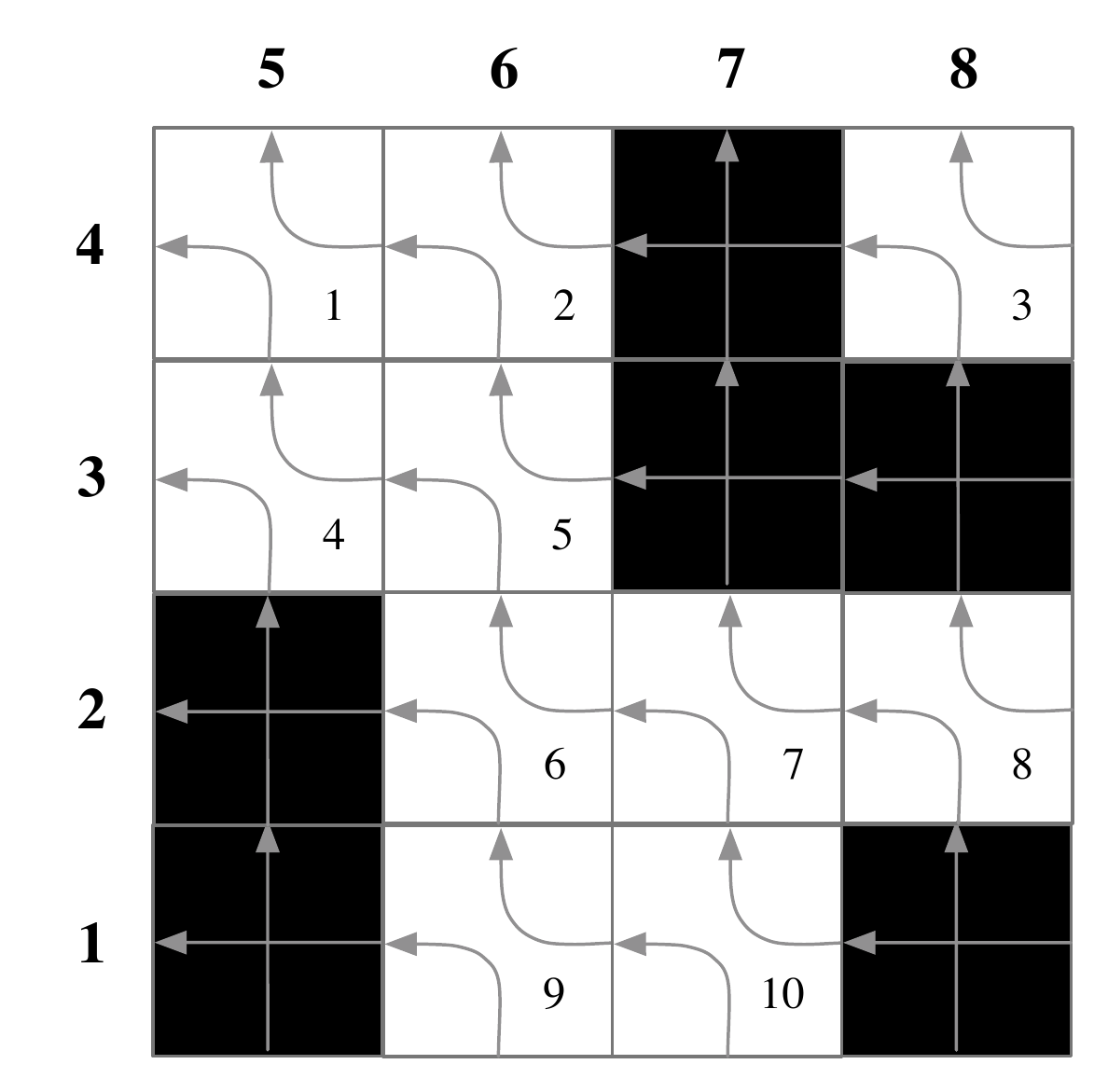}
\caption{}
\label{tnefig}
\end{center}
\end{figure}
\begin{ex}\label{tne}
Consider the diagram in Figure~\ref{tnefig}. For this example, we have labelled the white squares in a regular font, and the row and column labels in bold font. We have, for example, $\lef{7}=\vb{4}$ and $\up{7}=\vb{7}$, while $\lef{8}=\vb{7}$ and $\up{8}=\vb{6}$. 
On the other hand $A(5)=\{2\}, R(5)=\emptyset, B(5)=\{6,9\}$ and $L(5)=\{4\}$.
\end{ex}
Before proving the main theorem of this section, we note that by the definition of $M(D)$ we have $\vb{w}\in\ker(M(D))$ if and only if for every white square $i$, the following identity holds $$\vb{w}_{A(i)}+\vb{w}_{L(i)}=\vb{w}_{B(i)}+\vb{w}_{R(i)}.$$ 

\begin{thm} \label{main2}
If $D$ is a diagram and  $\sigma$ is the restricted permutation obtained from $D$, then $$\ker(P_\omega+P_\sigma) \simeq \ker(M(D)).$$

\end{thm}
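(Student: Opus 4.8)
The plan is to build an explicit linear isomorphism by reading the entries of $\vb{v}$ off the pipe dream of $D$ in its toric labelling. Given $\vb{v}\in\ker(P_\omega+P_\sigma)$, I would define $\vb{w}=\Phi(\vb{v})$ by
$$\vb{w}_i:=\vb{v}_{\up{i}}-\vb{v}_{\lef{i}}\qquad(i\text{ a white square}),$$
the point being that $\up{i}$ and $\lef{i}$ are the row/column labels at which the two pipes through the hyperbola on $i$ terminate, so $\Phi$ converts the $\tau$-data carried by $\vb{v}$ (through Lemma~\ref{technicallemma2}) into box-data on $D$.

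First I would record the elementary pipe combinatorics that drives everything. Following the top arc of a white square sends a pipe upward; it runs straight through the black squares above until it reaches the next white square in the same column, where it enters the bottom arc and turns left. Consequently, if $i_1,\dots,i_r$ are the white squares of a fixed column read from top to bottom, then $\up{i_t}=\lef{i_{t-1}}$ for $2\le t\le r$, while the extreme squares satisfy $\up{i_1}=c$ and $\lef{i_r}=\tau(c)$, where $c$ is that column's (toric) label; the mirror-image statement holds along each row with its label $\rho$ and $\tau(\rho)$. These identities are exactly where Definition~\ref{omega} and the toric labelling are used.

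The core of the proof is then a telescoping computation. Setting $a_t:=\up{i_t}$, the column identities give $\lef{i_t}=a_{t+1}$ and hence $\vb{w}_{i_t}=\vb{v}_{a_t}-\vb{v}_{a_{t+1}}$, so the sums over the white squares strictly above and strictly below $i=i_t$ collapse; substituting $\vb{v}_b=-\vb{v}_{\tau(b)}$ from Lemma~\ref{technicallemma2} into the two surviving boundary terms yields $\vb{w}_{A(i)}-\vb{w}_{B(i)}=-(\vb{v}_{\up{i}}+\vb{v}_{\lef{i}})$. Running the identical argument along the row through $i$ produces $\vb{w}_{R(i)}-\vb{w}_{L(i)}=-(\vb{v}_{\up{i}}+\vb{v}_{\lef{i}})$, since the same two labels $\up{i}$ and $\lef{i}$ reappear as the relevant row data at $i$. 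Equating the two expressions gives precisely $\vb{w}_{A(i)}+\vb{w}_{L(i)}=\vb{w}_{B(i)}+\vb{w}_{R(i)}$, the condition characterising $\ker(M(D))$; thus $\Phi$ maps into $\ker(M(D))$.

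To finish I would exhibit a two-sided inverse (working over a field of characteristic $\neq 2$, the kernels being computed over $\mathbb{Q}$). Using the notation $\vb{w}_S$, define $\Psi(\vb{w})$ on a column label $c$ by $\tfrac12\vb{w}_{S}$, where $S$ is the set of white squares in that column, and on a row label $\rho$ by $-\tfrac12\vb{w}_{S'}$, where $S'$ is the set of white squares in that row. The telescoped column and row sums show at once that $\Phi\circ\Psi=\mathrm{id}$ and $\Psi\circ\Phi=\mathrm{id}$, while the defining relation $\vb{w}_{A(i)}+\vb{w}_{L(i)}=\vb{w}_{B(i)}+\vb{w}_{R(i)}$ is exactly what is needed to check that $\Psi(\vb{w})$ satisfies $\vb{v}_b=-\vb{v}_{\tau(b)}$ and so lies in $\ker(P_\omega+P_\sigma)$ by Lemma~\ref{technicallemma2}. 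I expect the main obstacle to be the combinatorial bookkeeping in the second and third steps: pinning down the orientation of the two hyperbola arcs, verifying the boundary identities $\up{i_1}=c$, $\lef{i_r}=\tau(c)$ (and their row analogues) in the toric labelling, and handling the degenerate cases (a column or row containing a single white square, or pipes that exit directly to the boundary) so that the single sign flip $\vb{v}_b=-\vb{v}_{\tau(b)}$ reconciles the column boundary term with the row boundary term.
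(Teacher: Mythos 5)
Your construction is essentially the paper's: your $\Phi$ is $-\phi$ and your $\Psi$ is $\tfrac12\psi$ in the paper's notation, the identities $\up{s_i}=\lef{s_{i+1}}$ for consecutive white squares are exactly the paper's key observation, and your telescoping verification that $\Phi$ lands in $\ker(M(D))$, together with the computation $\Psi\circ\Phi=\mathrm{id}$ (hence $\Phi$ injective), is correct and matches the paper.

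The gap is in the reverse direction. You assert that the two remaining claims --- that $\Psi(\vb{w})$ satisfies $\vb{v}_a=-\vb{v}_{\tau(a)}$, and that $\Phi\circ\Psi=\mathrm{id}$ --- follow ``at once'' from the telescoped row and column sums. They do not: your telescopes only control $\vb{w}$ along a \emph{single} row or a \emph{single} column, whereas the pipe joining a label $a$ to $\tau(a)$ zig-zags through many rows and columns. Concretely, $\Psi(\vb{w})_a+\Psi(\vb{w})_{\tau(a)}$ is $\pm\tfrac12$ times a column sum of $\vb{w}$ plus $\pm\tfrac12$ times a row or column sum at the far end of the pipe, and no single instance of the relation $\vb{w}_{A(i)}+\vb{w}_{L(i)}=\vb{w}_{B(i)}+\vb{w}_{R(i)}$ connects these two sums. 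The paper bridges them by an induction along the pipe: if $(s_1,\ldots,s_k)$ are the white squares it visits, one shows the quantity $(-1)^{i+1}\vb{w}_{s_i}+\vb{w}_{A(s_i)}-\vb{w}_{B(s_i)}$ is independent of $i$, each step combining two instances of the kernel relation (at $s_{i-1}$ and at $s_i$), with a case split on the parity of $i$ and, at the end, of $k$. That induction is the bulk of the proof and is missing from your proposal. Likewise $\Phi(\Psi(\vb{w}))_i=\Psi(\vb{w})_{\up{i}}-\Psi(\vb{w})_{\lef{i}}$ involves row/column sums taken far from the square $i$, so it is not a local telescoping consequence either; since for the isomorphism you only need injectivity of $\Psi$, a shorter substitute (the paper's) is a minimal-counterexample argument: take the least white square $i$ with $\vb{w}_i\neq 0$, so $\vb{w}_{A(i)}=\vb{w}_{L(i)}=0$, and combine the kernel relation at $i$ with the vanishing of the row and column sums through $i$ to force $\vb{w}_i=0$.
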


\begin{proof}
We place the toric labelling on $D$. Suppose that $D$ has $N$ white squares labelled $1,2,\ldots, N$. The theorem is proved once we exhibit injective functions $\phi: \ker(P_\omega+P_\sigma) \rightarrow \ker(M(D))$ and $\psi: \ker(M(D)) \rightarrow \ker(P_\omega+P_\sigma)$. The reader may wish to refer to the example immediately following this proof where we give an explicit calculation of $\phi$ and $\psi$ for the diagram of Figure~\ref{tnefig}.

Given $\vb{v}\in\ker(P_\omega+P_\sigma)$, we take $\vb{w}:=\phi(\vb{v})$ to be the vector whose entries are defined by $\vb{w}_i = \vb{v}_{\lef{i}}-\vb{v}_{\up{i}}$. To show $\vb{w}\in\ker(M(D))$ we need only to check that for all white squares $i$ the relation $\vb{w}_{A(i)}+\vb{w}_{L(i)}-\vb{w}_{B(i)}-\vb{w}_{R(i)}=0$ holds. 

Notice that, if $S=\{s_1,s_{2},\ldots,s_k\}$ is any block of consecutive white squares in the same row, where $s_1$ is the left-most white square, and $s_k$ is the right-most, then we have $\vb{w}_S=\vb{v}_{\lef{s_1}}-\vb{v}_{\up{s_k}}$ since one can easily check that $\up{s_i}=\lef{s_{i+1}}$ for $i \in [k-1]$. For a small example, see Figure~\ref{proof1}.  \begin{figure}[htbp]
\begin{center}
\includegraphics[width=2.5in]{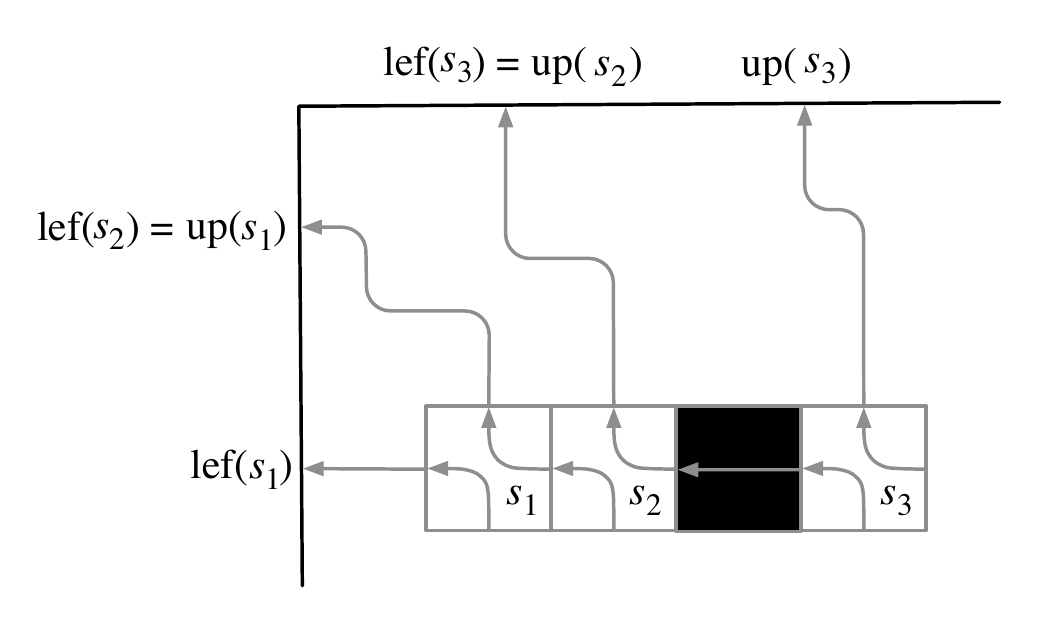}
\caption{$\vb{w}_{\{s_1,s_2,s_3\}} = \vb{v}_{\lef{s_1}} - \vb{v}_{\up{s_3}}$}
\label{proof1}
\end{center}
\end{figure}Similarly, if $S=\{s_1,s_{2},\ldots,s_k\}$ is any block of consecutive white squares in the same column, where $s_1$ is the bottom-most white square, and $s_k$ is the top-most white square, then $\vb{w}_S=\vb{v}_{\lef{s_1}}-\vb{v}_{\up{s_k}}$. 

Fix a white square $i$. For the sake of brevity, we assume that $A(i),L(i),B(i)$ and $R(i)$ are all non-empty; the remaining cases are similar. We now show that $\vb{w}_{A(i)}+\vb{w}_{L(i)}-\vb{w}_{B(i)}-\vb{w}_{R(i)} =0$ (Figure~\ref{proof2}). Suppose that the label of the row containing $i$ is $r$ and the label of the column containing $i$ is $c$. Let $s_{r,1}$ be the left-most white square in row $r$, $s_{r,k}$ the right-most white square in row $r$, $s_{c,1}$ the bottom-most white square in column $c$ and $s_{c,\ell}$ the top-most white square in column $c$. 
\begin{figure}
\begin{center}
\includegraphics[width=2.5in]{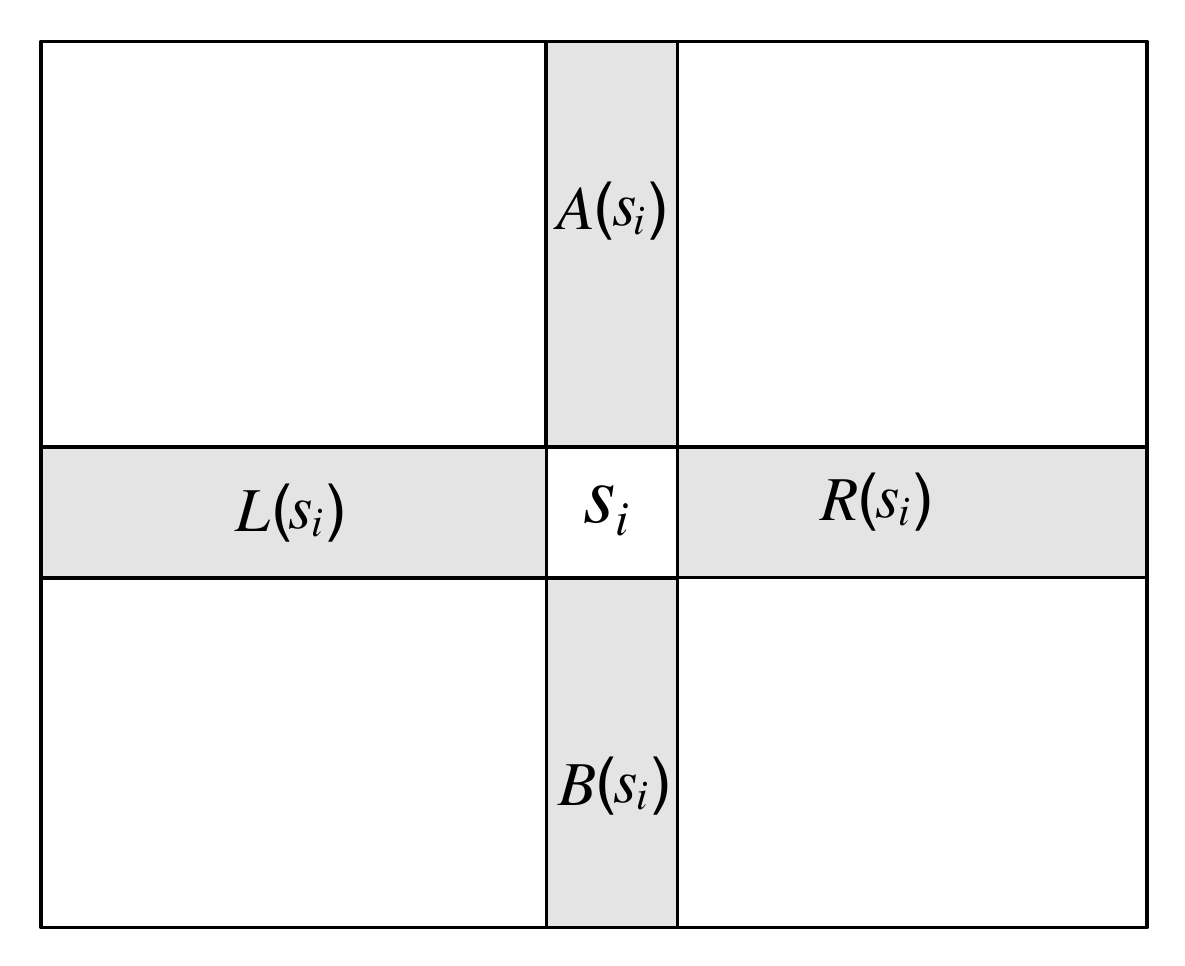}
\caption{}
\label{proof2}
\end{center}
\end{figure}We have 
\begin{eqnarray*}
\vb{w}_{A(i)}+\vb{w}_{L(i)}-\vb{w}_{B(i)}-\vb{w}_{R(i)} &= &  (\vb{v}_{\up{i}} - \vb{v}_{\up{s_{c,\ell}}}) + (\vb{v}_{\lef{s_{r,1}}} - \vb{v}_{\lef{i}}) \\
  && -(\vb{v}_{\lef{s_{c,1}}} - \vb{v}_{\lef{i}}) - (\vb{v}_{\up{i}} - \vb{v}_{\up{s_{r,k}}}) \\
& = & (\vb{v}_{\lef{s_{r,1}}} + \vb{v}_{\up{s_{r,k}}}) - (\vb{v}_{\up{s_{c,1}}}+\vb{v}_{\lef{s_{c,\ell}}})\\
&=& (\vb{v}_r + \vb{v}_{\tau(r)}) - (\vb{v}_c+\vb{v}_{\tau(c)})\\
& = & 0,
\end{eqnarray*}
where the last equality follows by Lemma~\ref{technicallemma2}.

Now define $\psi: \ker(M(D)) \rightarrow \ker(P_\omega+P_\sigma)$ as follows. Let $\vb{w}\in \ker(M(D))$ and write $\vb{v}=\psi(\vb{w})$. If $a$ is a column we take $\vb{v}_a$ to be the sum of all white squares in column $a$, but if $a$ is a row, then we take $\vb{v}_a$ to be the negative of the sum of all white squares in row $a$.  By Lemma~\ref{technicallemma2}, we want to show that for all labels $a$, we have
\begin{eqnarray}
\vb{v}_a+\vb{v}_{\tau(a)}& = &0.\label{one} 
\end{eqnarray}

Fix a label $a \in [m+n]$ and let $(s_1,s_2,\ldots,s_k)$ be the sequence of white squares used in the pipe from $a$ to $\tau(a)$. \begin{figure}[htbp]
\begin{center}
\includegraphics[width=3.5in]{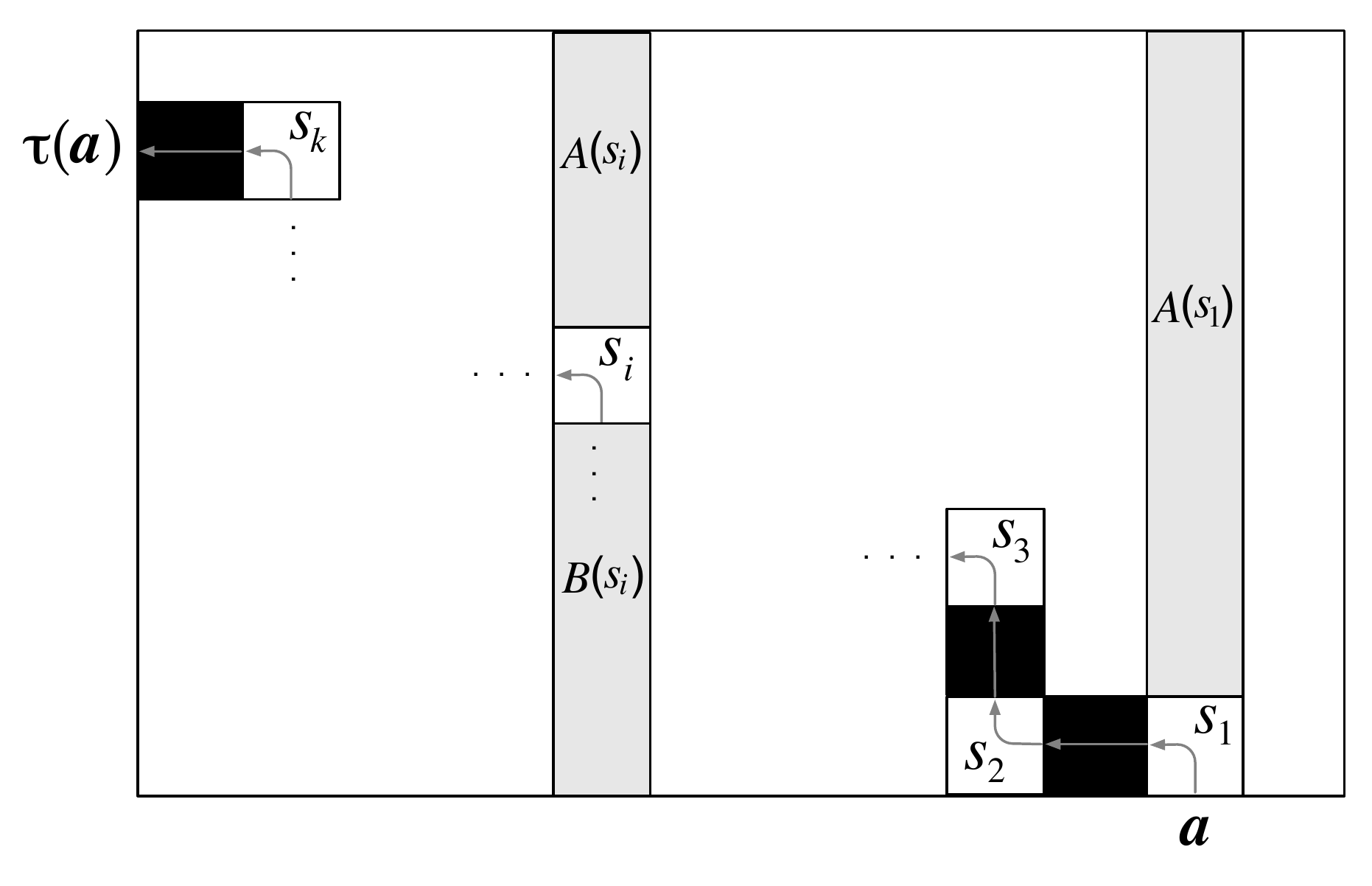}
\caption{}
\label{proof3}
\end{center}
\end{figure}We assume that $a$ is a column as the case in which $a$ is a row is argued similarly. We first prove by induction that, for every $i=1,\ldots, k$,  we have
\begin{eqnarray}
\vb{v}_a &= & \vb{w}_{s_1} + \vb{w}_{A(s_1)} \nonumber \\
&= &(-1)^{i+1}\vb{w}_{s_i}+\vb{w}_{A(s_i)}-\vb{w}_{B(s_i)}.\label{two1}
\end{eqnarray}

See Figure~\ref{proof3}. If $i=1$, then Equation~(\ref{two1}) is trivially true since $B(s_i)=\emptyset$. So suppose that it is true for all values less than $i>1$. There are two cases to consider. If $i$ is even, then $s_i$ is the first white square to the left of $s_{i-1}$. Note that $\vb{w}_{R(s_i)} = \vb{w}_{R(s_{i-1})} + \vb{w}_{s_{i-1}}$ and $\vb{w}_{L(s_{i-1})} = \vb{w}_{L(s_{i})} + \vb{w}_{s_{i}}$.  Since $\vb{w}\in\ker(M(D))$ we have the two equations

$$\vb{w}_{A(s_{i-1})} + \vb{w}_{L(s_{i})}+\vb{w}_{s_{i}} - \vb{w}_{B(s_{i-1})} - \vb{w}_{R(s_{i-1})}  =  0,$$ and
$$\vb{w}_{A(s_{i})} + \vb{w}_{L(s_{i})} - \vb{w}_{B(s_{i})} - \vb{w}_{R(s_{i-1})}-\vb{w}_{s_{i-1}}  =  0.$$

Subtracting the second equation from the first we obtain
\begin{eqnarray}
\vb{w}_{s_{i-1}}+\vb{w}_{A(s_{i-1})}-\vb{w}_{B(s_{i-1})} &=& -\vb{w}_{s_{i}} + \vb{w}_{A(s_{i})}-\vb{w}_{B(s_{i})}. \label{three1}
\end{eqnarray}
But since $i-1$ is odd, the left hand side of~(\ref{three1}) is equal, by induction, to $\vb{w}_{s_1} + \vb{w}_{A(s_1)}$ and thus~(\ref{three1}) implies~(\ref{two1}) for the case that $i$ is even. 

Now for $i$ odd, $s_{i}$ is the first white square above $s_{i-1}$. It is easy to check that we have the two equations 
$$\vb{w}_{A(s_{i-1})} = \vb{w}_{A(s_{i})} + \vb{w}_{s_{i}}$$ and 
$$\vb{w}_{B(s_{i})} = \vb{w}_{B(s_{i-1})} + \vb{w}_{s_{i-1}}.$$
By induction we obtain
\begin{eqnarray*}
\vb{w}_{s_1} + \vb{w}_{A(s_1)} &= &-\vb{w}_{s_{i-1}}+\vb{w}_{A(s_{i-1})}-\vb{w}_{B(s_{i-1})}\\
& = & -\vb{w}_{s_{i-1}} +\vb{w}_{A(s_{i})} + \vb{w}_{s_{i}} - (\vb{w}_{B(s_{i})} - \vb{w}_{s_{i-1}})\\
& = & \vb{w}_{s_{i}}  + \vb{w}_{A(s_{i})} - \vb{w}_{B(s_{i})}.
\end{eqnarray*}
 This finishes the proof of Equation~(\ref{two1}).
 
Now we verify Equation~(\ref{one}). If $k$ is even, then $\tau(a)$ is a column-label and so $A(s_k)=\emptyset$. By~(\ref{two1}),
\begin{eqnarray*}
\vb{v}_a  & = & \vb{w}_{s_1} + \vb{w}_{A(s_1)}\\
& = & -\vb{w}_{s_k}+\vb{w}_{A(s_k)}-\vb{w}_{B(s_k)}\\
&=& -\vb{w}_{s_k}-\vb{w}_{B(s_k)}\\
&=& -\vb{v}_{\tau(a)}.
\end{eqnarray*}
On the other hand, if $k$ is odd, then $\tau(a)$ is a row-label and so $L(s_k)=\emptyset$. Since $\vb{w}\in\ker(M(D))$, we must have $\vb{w}_{A(s_k)}-\vb{w}_{B(s_k)}=\vb{w}_{R(s_k)}$.
Hence
\begin{eqnarray*}
\vb{v}_a  & = & \vb{w}_{s_1} + \vb{w}_{A(s_1)}\\
& = & \vb{w}_{s_k}+\vb{w}_{A(s_k)}-\vb{w}_{B(s_k)}\\
&=& \vb{w}_{s_k}+\vb{w}_{R(s_k)}\\
&=& -\vb{v}_{\tau(a)}.
\end{eqnarray*}

The final step is to prove that the functions $\phi$ and $\psi$ are injective. First, suppose that $\psi(\vb{w})=\vb{0}$ with $\vb{w}\neq\vb{0}$. There must, therefore, exist a white square $i$ (with $i$ minimum) such that $\vb{w}_i\neq 0$ but $\vb{w}_{A(i)}=0$ and $\vb{w}_{L(i)}=0$. Since $\vb{w}\in\ker(M(D))$ we have $\vb{w}_{B(i)}+\vb{w}_{R(i)}=0$. On the other hand, since $\psi(\vb{w})=\vb{0}$ we have, by the construction of $\psi(\vb{w})$, that $\vb{w}_{B(i)}+\vb{w}_{i}=0$ and $0=\vb{w}_{R(i)}+\vb{w}_{i}=-\vb{w}_{B(i)}+\vb{w}_{i}$. Thus we must have $\vb{w}_i=0$, contradicting the choice of $i$. Therefore $\psi$ is injective.

To show that $\phi$ is injective we show that $\psi(\phi(\vb{v}))=-2\vb{v}$ for every $\vb{v}\in\ker(P_\omega+P_\sigma)$. First, if $a$ is a row then 
$$\psi(\phi(\vb{v}))_a = -\sum_{i \mbox{ in column }a} \phi(\vb{v})_i= -\sum_{i \mbox{ in column }a} (\vb{v}_{\lef{i}}-\vb{v}_{\up{i}})=-(\vb{v}_a - \vb{v}_{\tau(a)}).$$ On the other hand, by Lemma~\ref{technicallemma2}, we know that $\vb{v}_{\tau(a)} = -\vb{v}_a$ and thus $\psi(\phi(\vb{v}))_a = -2\vb{v}_a$. A similar argument shows that if $a$ is a column then $\psi(\phi(\vb{v}))_a = -2\vb{v}_a$. Hence $\phi$ is injective.
\end{proof}

\begin{rem}
The linear maps $\phi$ and $\psi$ defined in the proof of Theorem \ref{main2} are not inverses. However one can check that 
$$\psi\circ \phi = -2\cdot {\rm id}|_{{\rm ker}(P_\omega+P_\sigma)} \mbox{ and }
\phi\circ \psi=-2\cdot {\rm id}|_{{\rm ker}(M(D))}.$$
\end{rem}

\begin{ex}
In this example we give examples of the maps $\phi$ and $\psi$ from the above proof. If $M$ is a matrix, then $M^t$ denotes its transpose. We consider the diagram from Figure~\ref{tnefig}. The corresponding toric permutation is $(1\,4\,8\,7\,2\,6)(3\,5)$.  By Lemma~\ref{kernel}, we may construct $\vb{v}=(1,1,0,-1,0,-1,-1,1)^t$ corresponding to the cycle $(1\,4\,8\,7\,2\,6)$, and $\vb{v'}:=(0,0,1,0,-1,0,0,0)^t$ corresponding to the cycle $(3\,5)$. 
\begin{figure}[htbp]
\begin{center}
\includegraphics[width=2.5in]{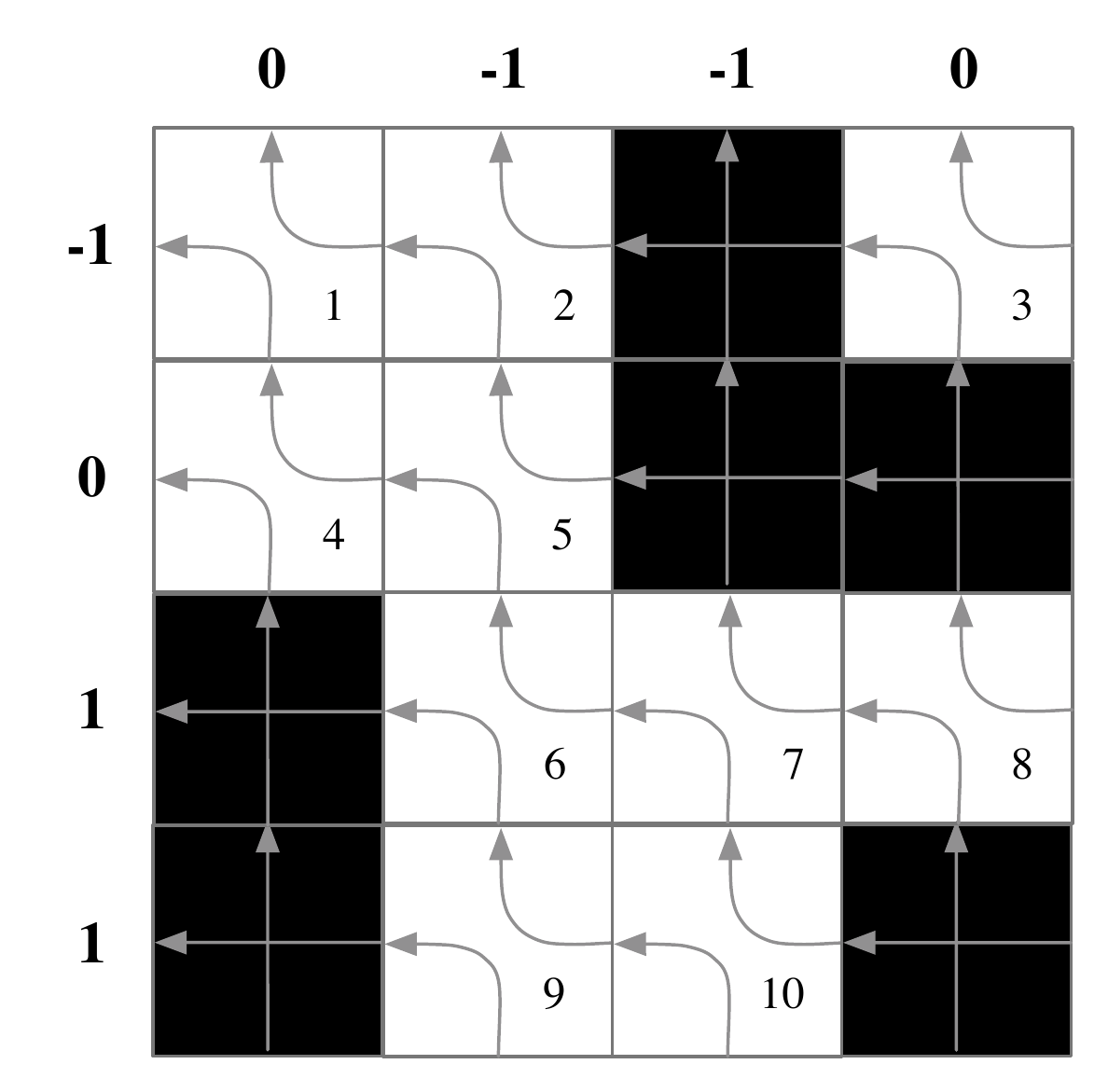}
\caption{Calculating $\phi((1,1,0,-1,0,-1,-1,1)^t)$ }
\label{tnefig2}
\end{center}
\end{figure}
In Figure~\ref{tnefig2}, we now replaced the row or column-label $i$ with $\vb{v}_i$. We wish to construct an element $\phi(\vb{v})=\vb{w}$ in $\ker((M(D))$. By the isomorphism in the proof of Theorem~\ref{main2}, we see that, for example, $\vb{w}_8= \vb{v}_{\lef{8}}-\vb{v}_{\up{8}} = -1-(-1)= 0$. Continuing in this way, we find that $\vb{w}=(-1,1,-2,1,-1,2,0,0,0,2)^t$. 

In Figure~\ref{tnefig3}, we have replaced the white square labels from Figure~\ref{tnefig} with the corresponding value of $\vb{w}$. Using this, let us calculate $\psi(\vb{w})=\vb{u}$. This is easier: if $i$ is a row, then $\vb{u}_i$ is simply the negative of the row sum, and if $i$ is a column, then $\vb{u}_i$ is the column sum. Thus we obtain, $\vb{u}=(-2,-2,0,2,0,2,2,-2)^t$. It is easy to check that $\vb{u}=-2\vb{v} \in \ker(P_\sigma+P_\omega)$. 
\begin{figure}[htbp]
\begin{center}
\includegraphics[width=2.5in]{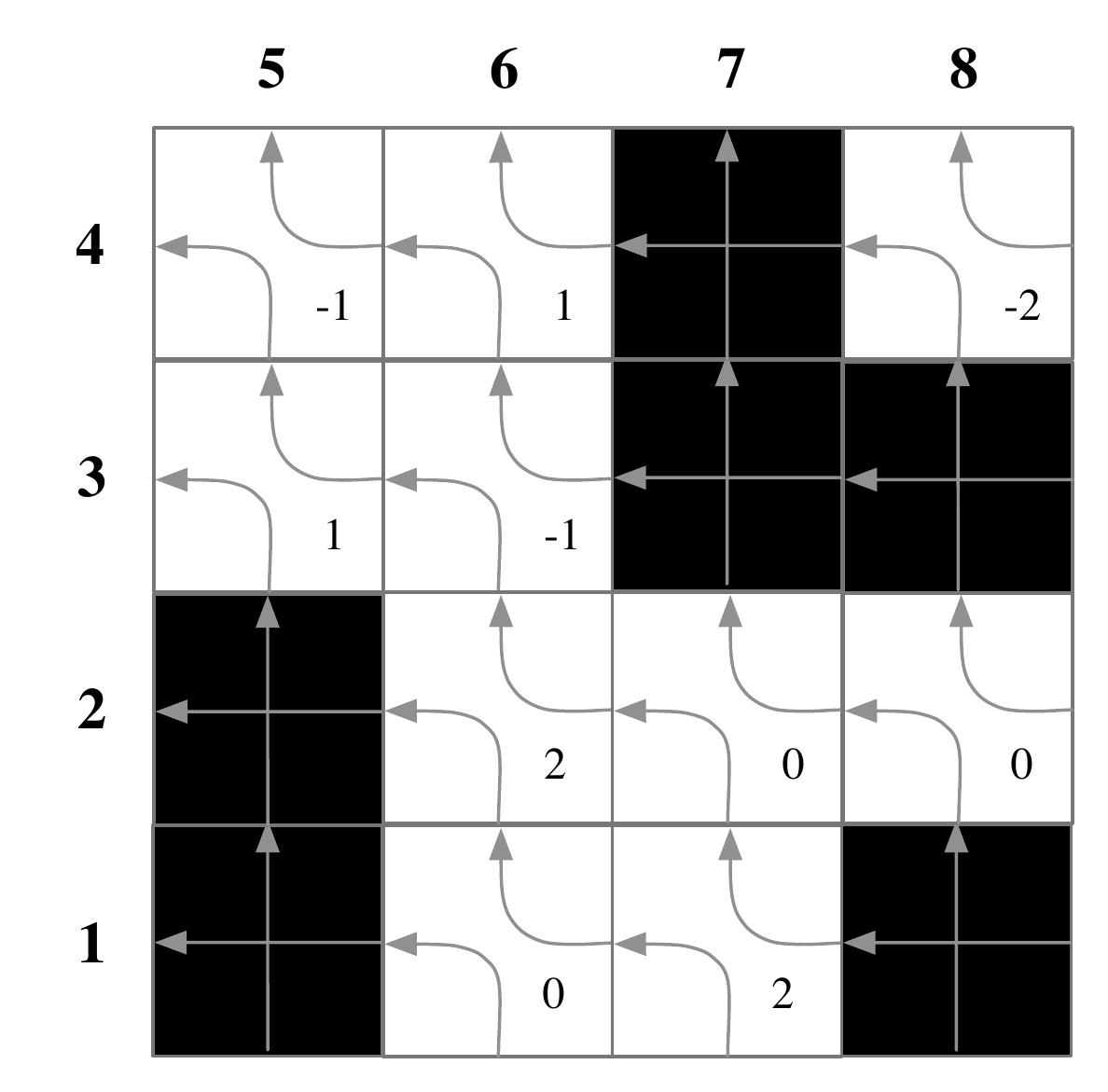}
\caption{Calculating $\psi((-1,1,-2,1,-1,2,0,0,0,2)^t).$ }
\label{tnefig3}
\end{center}
\end{figure}
\end{ex}

\begin{cor} \label{maincor2}
For a Cauchon diagram $D$, the dimension of the $\C{H}$-stratum corresponding to $D$ is equal to the number of odd cycles in the disjoint cycle decomposition of the toric permutation associated to $D$.
\end{cor}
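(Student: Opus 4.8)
The plan is to combine the three results that immediately precede this corollary, since together they already relate all of the relevant quantities. Let $D$ be the given Cauchon diagram, let $\sigma$ be its restricted permutation obtained via pipe dreams, and let $\tau=\sigma\omega^{-1}$ be the associated toric permutation. I would proceed by chaining together a sequence of equalities of dimensions.

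First I would invoke Theorem~\ref{BL}, which identifies the dimension of the $\C{H}$-stratum corresponding to $D$ with $\dim(\ker(M(D)))$; this converts the quantity of interest into a purely linear-algebraic one. Next, Theorem~\ref{main2} furnishes an isomorphism $\ker(P_\omega+P_\sigma)\simeq\ker(M(D))$, and since isomorphic vector spaces have equal dimension, this gives $\dim(\ker(M(D)))=\dim(\ker(P_\omega+P_\sigma))$. Finally, Lemma~\ref{kernel} computes this last dimension as exactly the number of odd cycles in the disjoint-cycle decomposition of $\tau$. Reading the chain from beginning to end yields the asserted equality.

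Since all of the substantive work has already been carried out in the preceding statements, I expect no real obstacle here: the corollary is a formal consequence of assembling the pieces. The only point requiring a moment's care is that Theorem~\ref{main2} is phrased as a vector-space isomorphism rather than as an equality of dimensions, so I would explicitly remark that an isomorphism preserves dimension before drawing the conclusion; I would likewise note that the hypothesis that $D$ be \emph{Cauchon} (rather than an arbitrary diagram) is exactly what licenses the application of Theorem~\ref{BL}, whereas Theorem~\ref{main2} and Lemma~\ref{kernel} hold for any diagram. The genuine difficulty of the whole development lies in establishing the isomorphism of Theorem~\ref{main2} through the maps $\phi$ and $\psi$, together with the combinatorial bookkeeping of Lemma~\ref{kernel}; once those are in hand, the corollary follows at once.
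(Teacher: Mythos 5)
Your proposal is correct and is essentially the paper's own argument: the paper also derives the corollary by combining Theorem~\ref{BL} with the isomorphism of Theorem~\ref{main2} (with Lemma~\ref{kernel} supplying the count of odd cycles). Your extra remarks about dimension preservation and where the Cauchon hypothesis enters are accurate but not needed beyond what the paper states.
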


\begin{proof}
This follows immediately from Theorems~\ref{BL} and~\ref{main2}.
\end{proof}

\section{Enumeration of $\C{H}$-primes with respect to dimension}
In this section we apply Corollary~\ref{maincor2} to obtain enumeration formulas for the total number of $m\times n$ $\mathcal{H}$-strata of a given dimension. 

Suppose that we are given an $m\times n$ diagram $D$ with the property that the disjoint cycle decomposition of the toric permutation $\tau$ consists of exactly one cycle $\tau=(a_1\,a_2\,\ldots\,a_{m+n})$. Without loss of generality, take $a_1=1$. Using the pipe-dreams visualization of $\tau$, it is easy to check that the sequence $(a_1,a_2,\ldots,a_{m+n})$ consists of contigious subsequences $R_1,C_1,\ldots, R_k, C_k$ alternating between increasing sets of row-labels (the $R_i)$ and decreasing sets of column-labels (the $C_i$). In other words, we may write $\tau=(R_1, C_1, R_2,\ldots, R_k, C_k)$ where the $R_i$ form a partition of $[m]$ and each $R_i$ is written in increasing order, while the $C_i$ form a partition of $m+[n]$ and each $C_i$ is written in decreasing order. We call $\tau$ a \emph{toric cycle}.

On the other hand, given a toric cycle $\tau=(R_1, C_1, R_2,\ldots, R_k, C_k)$, it is a simple matter to check that the permutation $\tau\omega$ is restricted in the sense of Section~\ref{pipedreams}. By Theorems~\ref{cauchonsthm} and~\ref{orderisomorphic}, we conclude that there exists a unique Cauchon diagram whose toric permutation is $\tau$. 

\begin{defn}
The number of partitions of $[n]$ into $k$ non-empty parts is the \emph{Stirling number of the second kind}, and will here be denoted by ${n\brace k}$. Note that some authors write $S(n,k)$ for ${n\brace k}$. 
\end{defn}

The above discussion implies that the number $d_{m,n}$  of $m\times n$ diagrams whose toric permutation consists of exactly one cycle is exactly $$d_{m,n} = \sum_{k=1}^{\min(m,n)} k!(k-1)!{m\brace k}{n\brace k}.$$ 

Let $\mathcal{D}(x,y):=\sum_{m,n\geq 1}d_{m,n}\frac{x^m}{m!}\frac{y^n}{n!}$ be the exponential generating function of $d_{m,n}$ and let $C(x,y)$ be the exponential generating function of all toric permutations (and hence of Cauchon diagrams). The relation between these two generating functions is provided to us via the well-known exponential formula. Note that while this formula is given in Section 5.1 of Stanley~\cite{stanley} (see also Chapter 3 in Wilf~\cite{wilf}) for single variable exponential generating functions,  it is straightforward to generalize the result to multivariable exponential generating functions. 

For those readers unfamiliar with the exponential formula, we now roughly describe the approach. Suppose we wish to find the exponential generating function $A(x,y)$ of a set of a structures, where each structure can be written uniquely as a disjoint union of labelled ``irreducible'' components. For example, in the case that is of interest to us, the ``structures'' are the toric permutations, and the labelled irreducible components are the toric cycles. The exponential formula says that if $B(x,y)$ is the exponential generating function for the labelled irreducible components, then $A(x,y)=\exp(B(x,y))$. In our case then, we obtain the following formula. 
\begin{eqnarray}
C(x,y)&=&\exp(x+y)\exp(\mathcal{D}(x,y))\label{eqnC1},
\end{eqnarray} where the extra $\exp(x+y)$ is the exponential generating function for the all-black diagrams (i.e., the identity toric permutation).

We may refine $\mathcal{D}(x,y)$ by noting that $$\mathcal{D}_e(x,y)=\frac{1}{2}(\mathcal{D}(x,y) - \mathcal{D}(-x,-y))$$ is the generating function for the even toric cycles, while $$\mathcal{D}_o(x,y)=\frac{1}{2}(\mathcal{D}(x,y) + \mathcal{D}(-x,-y))$$ is the generating function for the odd toric cycles. 

Therefore, if $C(x,y,t)$ is the generating function whose coefficient of $\frac{x^n}{n!}\frac{y^m}{m!}t^d$ is the number of $m\times n$ Cauchon diagrams with $d$ odd cycles in the toric permutation, then 
\begin{eqnarray} 
C(x,y,t) & = & \exp(x+y+\mathcal{D}_e(x,y) +t\mathcal{D}_o(x,y)) \nonumber \\
& = & \exp(x+y)\exp(\mathcal{D}(x,y))^{\frac{t+1}{2}}\exp(\mathcal{D}(-x,-y))^{\frac{t-1}{2}}.\label{eqnD}
\end{eqnarray}
On the other hand, it follows from \cite[Corollary 1.5]{launois1} that the number of $\C{H}$-primes in $\Oq$ is the poly-Bernoulli number $B_n^{(-m)}$ \cite{kaneko2}.  As a consequence, we may apply the work of Kaneko~\cite[Remark~p~223]{kaneko2} to conclude that $C(x,y)$ satisfies
\begin{eqnarray} 
C(x,y)&=&\frac{e^{x+y}}{e^x+e^y-e^{x+y}}.\label{eqnC2}
\end{eqnarray}
We thus obtain
\begin{thm}\label{egf4}
If $h(m,n,d)$ is the number of $m\times n$ Cauchon diagrams whose toric permutation has $d$ odd cycles, then $C(x,y,t) = \sum_{m,n,d}  h(m,n,d)t^d\frac{x^m}{m!}\frac{y^n}{n!}$ satisfies 
\begin{eqnarray}
C(x,y,t) &= & (e^{-y}+e^{-x}-1)^\frac{-1-t}{2}(e^x+e^y-1)^\frac{1-t}{2}.\label{eqnD2}
\end{eqnarray}

\end{thm}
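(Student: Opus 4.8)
The plan is purely algebraic: the substantive combinatorial content is already packaged into Equations (\ref{eqnD}), (\ref{eqnC1}) and (\ref{eqnC2}), so it only remains to eliminate $\mathcal{D}(x,y)$ between these relations and simplify. First I would isolate $\exp(\mathcal{D}(x,y))$. Combining (\ref{eqnC1}) and (\ref{eqnC2}) gives $\exp(x+y)\exp(\mathcal{D}(x,y)) = \frac{e^{x+y}}{e^x+e^y-e^{x+y}}$, whence $\exp(\mathcal{D}(x,y)) = \frac{1}{e^x+e^y-e^{x+y}}$. The one cosmetic step worth isolating is the factorisation $e^x+e^y-e^{x+y}=e^{x+y}(e^{-x}+e^{-y}-1)$, which rewrites this as
$$\exp(\mathcal{D}(x,y)) = \frac{e^{-x-y}}{e^{-x}+e^{-y}-1}.$$

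Next I substitute $x\mapsto -x$ and $y\mapsto -y$ to obtain the companion identity $\exp(\mathcal{D}(-x,-y)) = \frac{e^{x+y}}{e^x+e^y-1}$. Plugging both expressions into (\ref{eqnD}) and splitting each factor into an exponential-of-linear part and a remaining algebraic part, the exponential prefactors combine to $e^{(x+y)\alpha}$ with $\alpha = 1 - \frac{t+1}{2} + \frac{t-1}{2}$; a one-line computation gives $\alpha = 0$, so these prefactors cancel identically. What survives is exactly $(e^{-x}+e^{-y}-1)^{-\frac{t+1}{2}}(e^x+e^y-1)^{-\frac{t-1}{2}}$, and rewriting the exponents as $\frac{-1-t}{2}$ and $\frac{1-t}{2}$ yields (\ref{eqnD2}).

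There is no serious obstacle in the derivation itself; the only place demanding genuine care is the bookkeeping of signs in the fractional exponents. In particular one must check that the power $\frac{t-1}{2}$ applied to $\exp(\mathcal{D}(-x,-y))$ produces $(e^x+e^y-1)^{\frac{1-t}{2}}$ — the factor sitting in the denominator flips the sign of its exponent — rather than its reciprocal. For rigour I would also record that these are identities of formal power series in $x,y$ with coefficients polynomial in $t$: each base such as $e^{-x}+e^{-y}-1$ has constant term $1$, so its fractional powers are well-defined formal power series via the binomial expansion, and (\ref{eqnC1})--(\ref{eqnC2}) guarantee the relevant bases are invertible in the appropriate completed ring. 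With that remark in place, the equality of the two sides follows immediately from the manipulations above.
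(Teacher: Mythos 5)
Your proposal is correct and follows essentially the same route as the paper: the paper likewise compares (\ref{eqnC1}) and (\ref{eqnC2}) to get $\exp(\mathcal{D}(x,y))=(e^x+e^y-e^{x+y})^{-1}=e^{-x-y}(e^{-y}+e^{-x}-1)^{-1}$ and then substitutes into (\ref{eqnD}). You simply spell out the exponent bookkeeping and the formal-power-series justification that the paper leaves implicit.
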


\begin{proof}
Comparing Equations~(\ref{eqnC1}) and~(\ref{eqnC2}) we see that 
\begin{eqnarray*} 
\exp(D(x,y)) &= &(e^x + e^y-e^{x+y})^{-1}\\
& = & e^{-x-y}(e^{-y}+e^{-x}-1)^{-1}.
\end{eqnarray*}
Substituting the latter equality into Equation~(\ref{eqnD}) leads to Equation~(\ref{eqnD2}), as desired.
\end{proof}
Corollary~\ref{maincor2} and Theorem~\ref{egf4} immediately give us
\begin{cor} \label{enumerationcor}
If $h(m,n,d)$ is the number of $d$-dimensional $\C{H}$-strata in the prime spectrum of $\Oq$, then $$C(x,y,t) = \sum_{m,n,d} h(m,n,d)t^d\frac{x^m}{m!}\frac{y^n}{n!}$$ satisfies 
\begin{eqnarray}
C(x,y,t) &= & (e^{-y}+e^{-x}-1)^\frac{-1-t}{2}(e^x+e^y-1)^\frac{1-t}{2}.
\end{eqnarray}\qed
\end{cor}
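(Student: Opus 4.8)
The plan is to observe that the Corollary is a direct reinterpretation of Theorem~\ref{egf4}: the generating function appearing there already has the desired closed form, so all that remains is to verify that the quantity it enumerates---the number of $m \times n$ Cauchon diagrams whose toric permutation has $d$ odd cycles---coincides with the number of $d$-dimensional $\C{H}$-strata in $\Oq$. Thus no new analysis or computation is required; the work is entirely one of matching the two combinatorial descriptions of $h(m,n,d)$.

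First I would recall the bijective dictionary between the objects being counted. By Theorem~\ref{Hstratification}, the set ${\rm spec}(\Oq)$ is partitioned into $\C{H}$-strata, each of which contains a unique $\C{H}$-prime ideal, so that $\C{H}$-strata are in one-to-one correspondence with the elements of $\hspec$. By Cauchon's Theorem~\ref{cauchonsthm}, $\hspec$ is in turn in bijection with the set of $m \times n$ Cauchon diagrams. Composing these two bijections, I obtain a correspondence under which each $\C{H}$-stratum is associated to a unique Cauchon diagram $D$, and hence to the toric permutation $\tau = \sigma\omega^{-1}$ determined by $D$.

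Next I would transport the dimension statistic across this correspondence. Corollary~\ref{maincor2} asserts precisely that the dimension of the $\C{H}$-stratum attached to a Cauchon diagram $D$ equals the number of odd cycles in the disjoint-cycle decomposition of the toric permutation of $D$. Consequently, fixing $m$, $n$, and $d$, the number of $d$-dimensional $\C{H}$-strata equals the number of $m \times n$ Cauchon diagrams whose toric permutation has exactly $d$ odd cycles. This is exactly the integer $h(m,n,d)$ of Theorem~\ref{egf4}, so the two generating functions denoted $C(x,y,t)$ agree coefficientwise, and the closed formula of Theorem~\ref{egf4} transfers verbatim.

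I do not expect any genuine obstacle, since the substantive content has already been established: the equality of dimension with odd-cycle count is Corollary~\ref{maincor2} (itself resting on the isomorphism of Theorem~\ref{main2} and the kernel computation of Lemma~\ref{kernel}), while the evaluation of the generating function is Theorem~\ref{egf4}. The only point demanding care is the bookkeeping that the unique $\C{H}$-prime in each stratum is the object to which Cauchon's parametrization and the toric permutation are attached, so that ``$d$-dimensional stratum'' and ``diagram with $d$ odd cycles'' index the same set; this is immediate from Theorem~\ref{Hstratification}. The Corollary therefore follows at once.
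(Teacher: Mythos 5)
Your argument is correct and is exactly the paper's proof: the corollary is obtained by combining Theorem~\ref{egf4} with Corollary~\ref{maincor2}, using the bijection between $\C{H}$-strata and Cauchon diagrams to identify the two counting sequences. The paper states this in one line; your version merely makes the bookkeeping explicit.
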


Bell and Launois, together with Nguyen~\cite{bln} and Lutley~\cite{bll}, have given exact formulas for the number of $2\times n$ and $3\times n$ primitive $\C{H}$-strata respectively. Using these formulas, asymptotic results are obtained and a general conjecture is proposed in~\cite{bldim} for the asymptotic proportion of $d$-dimensional $m\times n$ $\C{H}$-strata for fixed $d$ and $m$. While it turns out that the conjecture is false in general, we are now in a position to give the correct asymptotic proportions (see Theorem~\ref{egf2cor}). The most important result towards this goal is given in the following theorem.

\begin{thm} \label{egf2}
If $m,n >0$ and $d\geq 0$ are integers, then for all integers $k\in\{1-m,\ldots ,m+1\}$ there exist rational numbers $c_k(m,d)$ such that the number $h(m,n,d)$ of $m\times n$ $d$-dimensional $\mathcal{H}$-strata in the prime spectrum of $\Oq$ is given by $$h(m,n,d) = \sum_{k=1-m}^{m+1} c_k(m,d)k^n.$$

Moreover, if $a(d)=[t^d](t+1)(t+3)\cdots(t+2m-1)$, then $$c_{m+1}(m,d) = 2^{-m}a(d),$$ 
where $[t^d]p(t)$ denotes the coefficient of $t^d$ in the polynomial $p(t)$. 
\end{thm}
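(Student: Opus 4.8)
The plan is to extract the coefficients of the generating function $C(x,y,t)$ of Corollary~\ref{enumerationcor} by rewriting it as an explicit finite linear combination of exponentials $e^{ky}$ in the variable $y$. Set $\alpha=\frac{-1-t}{2}$ and $\beta=\frac{1-t}{2}$, and substitute $v=e^y$, so that
$$C(x,y,t)=(v^{-1}+e^{-x}-1)^{\alpha}(e^x+v-1)^{\beta}.$$
Pulling the constant-in-$x$ terms out of each factor gives $v^{-1}+e^{-x}-1=v^{-1}\bigl(1+v(e^{-x}-1)\bigr)$ and $e^x+v-1=v\bigl(1+v^{-1}(e^x-1)\bigr)$, and since $\beta-\alpha=1$ this yields
$$C=v\,\bigl(1+v(e^{-x}-1)\bigr)^{\alpha}\bigl(1+v^{-1}(e^x-1)\bigr)^{\beta}.$$
Because $e^{-x}-1$ and $e^x-1$ are power series in $x$ with zero constant term, the generalized binomial theorem expands each factor as a formal power series in $x$ with coefficients that are Laurent polynomials in $v$:
$$C=v\sum_{i,j\ge 0}\binom{\alpha}{i}\binom{\beta}{j}\,v^{\,i-j}(e^{-x}-1)^i(e^x-1)^j.$$

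Next I would extract $\left[\frac{x^m}{m!}\right]C$. The product $(e^{-x}-1)^i(e^x-1)^j$ has $x$-order at least $i+j$, so only the finitely many pairs with $i+j\le m$ contribute. Expanding $(e^{-x}-1)^i=\sum_{a=0}^i\binom{i}{a}(-1)^{i-a}e^{-ax}$ and $(e^x-1)^j=\sum_{b=0}^j\binom{j}{b}(-1)^{j-b}e^{bx}$ and using $\left[\frac{x^m}{m!}\right]e^{cx}=c^m$ shows that $\left[\frac{x^m}{m!}\right](e^{-x}-1)^i(e^x-1)^j=S(i,j,m)$ is an integer depending only on $i,j,m$. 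The surviving power $v^{\,1+i-j}=e^{(1+i-j)y}$ has exponent $k=1+i-j$, which, subject to $i,j\ge 0$ and $i+j\le m$, ranges exactly over $\{1-m,\dots,m+1\}$ (for each such $k$ the pair $i=\max(0,k-1)$, $j=\max(0,1-k)$ is admissible). Collecting terms by the value of $k$ gives
$$\left[\tfrac{x^m}{m!}\right]C=\sum_{k=1-m}^{m+1}C_k(t)\,e^{ky},\qquad C_k(t)=\sum_{\substack{i,j\ge 0,\ i+j\le m\\ 1+i-j=k}}\binom{\alpha}{i}\binom{\beta}{j}S(i,j,m),$$
where each $C_k(t)$ is a polynomial in $t$ with rational coefficients, since $\binom{\alpha}{i}$ and $\binom{\beta}{j}$ are polynomials in $\alpha,\beta$ and hence in $t$. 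Applying $\left[\frac{y^n}{n!}\right]e^{ky}=k^n$ and then $[t^d]$, and invoking Corollary~\ref{enumerationcor}, yields $h(m,n,d)=\sum_{k=1-m}^{m+1}c_k(m,d)k^n$ with $c_k(m,d)=[t^d]C_k(t)\in\mathbb{Q}$, establishing the first assertion.

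For the leading coefficient, the constraint $1+i-j=m+1$ together with $i+j\le m$ and $i,j\ge 0$ forces $(i,j)=(m,0)$, so $C_{m+1}(t)=\binom{\alpha}{m}S(m,0,m)$. The finite-difference identity $\sum_{a=0}^m(-1)^{m-a}\binom{m}{a}a^m=m!$ gives $S(m,0,m)=(-1)^m m!$, whence $C_{m+1}(t)=(-1)^m\,\alpha(\alpha-1)\cdots(\alpha-m+1)$. Writing $\alpha-r=-\tfrac12(t+1+2r)$ turns this product into $2^{-m}(t+1)(t+3)\cdots(t+2m-1)$, and extracting $[t^d]$ gives $c_{m+1}(m,d)=2^{-m}a(d)$, as claimed.

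All of these computations are elementary once the generating function is in the factored form above; the main point requiring care is the bookkeeping in the coefficient extraction — verifying that the exponents $k=1+i-j$ fill out precisely the interval $\{1-m,\dots,m+1\}$ and that the $x$-adic expansion is legitimate (it is, since each fixed power of $x$ receives contributions from only finitely many pairs $(i,j)$). The only genuinely non-routine ingredient is the finite-difference identity used to evaluate $S(m,0,m)$ for the leading coefficient, and this is standard.
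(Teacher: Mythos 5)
Your proof is correct, but it takes a genuinely different route from the paper's. The paper expands each factor of $C(x,y,t)$ symmetrically in $x$ and $y$ via the generalized binomial theorem applied to $1+(e^x-1)+(e^y-1)$, converts the resulting powers $(e^x-1)^\ell(e^y-1)^{k-\ell}$ into Stirling numbers of the second kind, resums using the identity $\sum_k {n\brace k}(x)_k=x^n$ to put each factor's coefficient into the form $\sum_\ell\left(\cdot\right)_\ell{m\brace \ell}\left(\cdot\right)^n$, and finally convolves the two factors over $(m',n')$, collapsing the $n'$-sum with the binomial theorem to produce the powers $(1-\ell_1+\ell_2)^n$. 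You instead treat the variables asymmetrically: by factoring $e^{\mp y}$ out of each factor (using $\beta-\alpha=1$) you reduce everything to a single $x$-adic binomial expansion whose coefficients are Laurent polynomials in $v=e^y$, so the powers $k^n$ appear immediately from $\left[\frac{y^n}{n!}\right]e^{ky}$ with $k=1+i-j$, with no Stirling numbers and no convolution; your range check $1-m\le 1+i-j\le m+1$ under $i,j\ge 0$, $i+j\le m$ is the exact analogue of the paper's bound $1-m\le 1-\ell_1+\ell_2\le 1+m$. The identification of the extremal term ($(i,j)=(m,0)$ versus the paper's $m'=\ell_1=0$, $\ell_2=m$) and the evaluation of $c_{m+1}(m,d)$ as $(-1)^m\alpha(\alpha-1)\cdots(\alpha-m+1)=2^{-m}(t+1)(t+3)\cdots(t+2m-1)$ are essentially identical in both arguments. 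Your version is shorter and cleaner for proving the stated theorem; the paper's longer computation has the side benefit of producing the fully explicit Stirling-number formula (Equation (\ref{eqnH})) for $h(m,n,d)$, which it then uses for the Maple-computed table of examples.
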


Before we begin the proof, we collect some elementary facts regarding the Stirling numbers of the second kind. These can be found in~\cite{stanley} for example.
\begin{prop} \label{snprop}
If $n$ and $k$ are nonnegative integers, then the following hold:
\begin{eqnarray}
{n\brace k} &=& \frac{1}{k!}\sum_{j=0}^{k}(-1)^{k-j}\binom{k}{j}j^n;\label{stir1}\\
\sum_{k=0}^n{n\brace k}(x)_k &=& x^n, \textnormal{ where $(x)_k:=x(x-1)\cdots (x-k+1)$};\\
\frac{1}{k!}(e^x-1)^k &=& \sum_{m=k}^\infty {m\brace k}\frac{x^m}{m!}.
\end{eqnarray}
\end{prop}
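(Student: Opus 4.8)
The plan is to establish each of the three identities by a direct combinatorial or generating-function argument, treating identity \eqref{stir1} as the foundation from which the remaining two follow. First I would prove \eqref{stir1}. The key observation is that $k!{n\brace k}$ counts the \emph{surjections} from $[n]$ onto $[k]$: every surjection determines a partition of $[n]$ into $k$ non-empty blocks (its fibres) together with a bijection from the set of blocks to $[k]$, and conversely. By inclusion--exclusion on the elements of $[k]$ that are \emph{missed}, the number of surjections equals $\sum_{j=0}^{k}(-1)^{k-j}\binom{k}{j}j^{n}$, since $j^{n}$ counts the functions $[n]\to[k]$ whose image lies in a fixed $j$-element subset. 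Dividing by $k!$ yields \eqref{stir1}. As a by-product the right-hand side vanishes for $n<k$, recovering the convention ${n\brace k}=0$ in that range, which I use below.

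Next I would prove the second identity. For a positive integer $x$, the quantity $x^{n}$ counts all functions $[n]\to[x]$, and each such function is determined uniquely by the partition of $[n]$ into its non-empty fibres together with an \emph{injection} of the resulting blocks into the codomain $[x]$. A partition with exactly $k$ blocks can be chosen in ${n\brace k}$ ways and then completed by an injection in $(x)_k=x(x-1)\cdots(x-k+1)$ ways, whence $x^{n}=\sum_{k=0}^{n}{n\brace k}(x)_k$ for every positive integer $x$. Both sides are polynomials in $x$ of degree $n$, so agreement at infinitely many values forces equality as polynomials, giving the identity in full generality.

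Finally I would deduce the third identity directly from \eqref{stir1}. Since ${m\brace k}=0$ for $m<k$, I may extend the sum to all $m\ge 0$ and substitute \eqref{stir1}:
\[
\sum_{m\ge k}{m\brace k}\frac{x^{m}}{m!}
=\frac{1}{k!}\sum_{j=0}^{k}(-1)^{k-j}\binom{k}{j}\sum_{m\ge 0}\frac{(jx)^{m}}{m!}
=\frac{1}{k!}\sum_{j=0}^{k}(-1)^{k-j}\binom{k}{j}e^{jx}
=\frac{1}{k!}(e^{x}-1)^{k},
\]
where the last step is the binomial theorem applied to $(e^{x}-1)^{k}$. Alternatively one may invoke the exponential formula used earlier in the paper: $e^{x}-1$ is the exponential generating function for a single non-empty block, so $\frac{1}{k!}(e^{x}-1)^{k}$ is the exponential generating function for set partitions into exactly $k$ blocks, and the coefficient of $x^{m}/m!$ is then ${m\brace k}$ by definition.

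None of the three steps presents a genuine obstacle; these are classical facts, and the argument is essentially bookkeeping. The only points requiring minor care are the interchange of summation order in the last display, which is justified either as an identity of formal power series in $x$ or by the absolute convergence of the finite-then-infinite double sum, and the routine remark in the second identity that a polynomial identity verified at infinitely many points is an identity of polynomials.
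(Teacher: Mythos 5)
Your three arguments are all correct, and they are the standard ones: inclusion--exclusion over the missed values to count surjections for \eqref{stir1} (with the useful by-product that the right-hand side vanishes for $n<k$), fibre-plus-injection counting together with the ``polynomials agreeing at infinitely many points'' argument for the second identity, and substitution of \eqref{stir1} into the exponential series followed by the binomial theorem for the third; the interchange of summations there is legitimate as an identity of formal power series, as you note. The comparison with the paper is short: the paper gives \emph{no} proof of this proposition at all, dismissing it with a citation to Stanley's book, since these facts are classical and play only an auxiliary role in the proof of Theorem~\ref{egf2}. So your write-up is not an alternative to the paper's argument but a self-contained replacement for its citation. What your version buys is completeness, and your closing alternative for the third identity --- reading $\frac{1}{k!}(e^x-1)^k$ as the exponential generating function for set partitions into exactly $k$ blocks --- is actually the more illuminating route in context, since it is precisely the exponential-formula viewpoint the paper uses in Section~5 to derive $C(x,y,t)$; what the paper's terse citation buys is brevity, which is a reasonable trade for facts this standard.
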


\begin{proof}[Proof of Theorem~\ref{egf2}]
We have
\begin{eqnarray*}
(e^x+e^y-1)^\frac{1-t}{2} & = & (1+ e^x-1+e^y-1)^\frac{1-t}{2}\\
& = & \sum_{k\geq 0} {\frac{1}{2}(1-t)\choose k} (e^x-1+e^y-1)^k \\
& = & \sum_{k\geq 0} {\frac{1}{2}(1-t)\choose k}\sum_{\ell=0}^k {k\choose \ell}(e^x-1)^\ell (e^y-1)^{k-\ell} \\
& = & \sum_{k\geq 0} {\frac{1}{2}(1-t)\choose k}\sum_{\ell=0}^k k!\left(\sum_{m=\ell}^\infty{m\brace \ell}\frac{x^m}{m!}\right)\left(\sum_{n=k-\ell}^\infty{n\brace k-\ell}\frac{y^n}{n!}\right)\\
& = & \sum_{k\geq 0} \sum_{\ell=0}^k\sum_{m=\ell}^\infty\sum_{n=k-\ell}^\infty\left(\frac{1}{2}(1-t)\right)_k {m\brace \ell}{n\brace k-\ell}\frac{x^m}{m!}\frac{y^n}{n!}.
\end{eqnarray*}
Thus if we set $f(m,n)=\left[\frac{x^m}{m!}\frac{y^n}{n!}\right](e^x+e^y-1)^\frac{1-t}{2}$, then 
\begin{eqnarray*}
f(m,n)&=& \sum_{\ell=0}^m\sum_{k=\ell}^{n+\ell}\left(\frac{1}{2}(1-t)\right)_k{m\brace \ell}{n\brace k-\ell}\\
&=& \sum_{\ell=0}^m\sum_{k=0}^{n}\left(\frac{1}{2}(1-t)\right)_{\ell+k}{m\brace \ell}{n\brace k}\\
&=& \sum_{\ell=0}^m\sum_{k=0}^{n}\left(\frac{1}{2}(1-t)\right)_{\ell}\left(\frac{1}{2}(1-t)-\ell\right)_{k}{m\brace \ell}{n\brace k}\\
& = & \sum_{\ell=0}^m\left(\frac{1}{2}(1-t)\right)_{\ell}{m\brace \ell}\sum_{k=0}^n \left(\frac{1}{2}(1-t)-\ell\right)_{k}{n\brace k}\\
& = & \sum_{\ell=0}^m\left(\frac{1}{2}(1-t)\right)_{\ell}{m\brace \ell}\left(\frac{1}{2}(1-t)-\ell\right)^n.
\end{eqnarray*}
Similarly, if we set $g(m,n)=\left[\frac{x^m}{m!}\frac{y^n}{n!}\right](e^{-x}+e^{-y}-1)^\frac{-1-t}{2},$ then $$g(m,n) = \sum_{\ell=0}^m\left(\frac{-1}{2}(1+t)\right)_{\ell}{m\brace \ell}(-1)^{m+n}\left(\frac{-1}{2}(1+t)-\ell\right)^n.$$
Hence
\begin{eqnarray}
&~&
\left[\frac{x^m}{m!}\frac{y^n}{n!}\right]C(x,y,t) \nonumber \\ 
& = & \sum_{m^\prime=0}^m\sum_{n^\prime=0}^n {m\choose m^\prime}{n\choose n^\prime}\left[\sum_{\ell_1=0}^{m^\prime}\left(\frac{1}{2}(1-t)\right)_{\ell_1}{m^\prime\brace \ell_1}\left(\frac{1}{2}(1-t)-\ell_1\right)^{n^\prime}\right] \nonumber\\
&& \cdot \left[ \sum_{\ell_2=0}^{m-m^\prime}\left(\frac{-1}{2}(1+t)\right)_{\ell_2}{m-m^\prime\brace \ell_2}(-1)^{m-m^\prime+n-n^\prime} \cdot\left(\frac{-1}{2}(1+t)-\ell_2\right)^{n-n^\prime}\right] \nonumber\\
&=& \sum_{m^\prime=0}^m\sum_{\ell_1=0}^{m^\prime}\sum_{\ell_2=0}^{m-m^\prime}{m\choose m^\prime}{m^\prime\brace \ell_1}{m-m^\prime\brace \ell_2}(-1)^{m-m^\prime}\left(\frac{1}{2}(1-t)\right)_{\ell_1}\nonumber\\
&& \cdot \left(\frac{-1}{2}(1+t)\right)_{\ell_2}\left[\sum_{n^\prime=0}^n {n\choose n^\prime} \left(\frac{1}{2}(1-t)-\ell_1\right)^{n^\prime} \cdot\left(\frac{1}{2}(1+t)+\ell_2\right)^{n-n^\prime}\right]\nonumber\\
& =&\sum_{m^\prime=0}^m\sum_{\ell_1=0}^{m^\prime}\sum_{\ell_2=0}^{m-m^\prime}{m\choose m^\prime}{m^\prime\brace \ell_1}{m-m^\prime\brace \ell_2}(-1)^{m-m^\prime} \cdot \left(\frac{1}{2}(1-t)\right)_{\ell_1}\nonumber\\ 
& & \cdot \left(\frac{-1}{2}(1+t)\right)_{\ell_2}\cdot\left(\frac{1}{2}(1-t)-\ell_1+\frac{1}{2}(1+t)+\ell_2\right)^n\nonumber\\
& = &\sum_{m^\prime=0}^m\sum_{\ell_1=0}^{m^\prime}\sum_{\ell_2=0}^{m-m^\prime}{m\choose m^\prime}{m^\prime\brace \ell_1}{m-m^\prime\brace \ell_2}(-1)^{m-m^\prime}  \left(\frac{1}{2}(1-t)\right)_{\ell_1}\left(\frac{-1}{2}(1+t)\right)_{\ell_2} \nonumber\\ 
& & \cdot (1-\ell_1+\ell_2)^n.\label{eqnH}
\end{eqnarray}

Note that within the indices of summation we have the bounds $0\leq \ell_1\leq m$ and $0\leq \ell_2\leq m$ and so $1-m\leq 1-\ell_1+\ell_2\leq 1+m$. Therefore, since $h(m,n,d)=[t^d]\left[\frac{x^m}{m!}\frac{y^n}{n!}\right]C(x,y,t)$, the first conclusion in the theorem statement follows from Equation (\ref{eqnH}). 

Now notice that $1-\ell_1+\ell_2 = 1+m$ if and only if $m^\prime=0$ (and thus $\ell_1=0$) and $\ell_2=m$. Therefore 
\begin{eqnarray*}
c_{m+1}(m,d) & = & [t^d](-1)^m\left(\frac{-1}{2}(1+t)\right)_{m} \\ 
& = & (-1)^m \left(\frac{-1}{2}\right)^m [t^d](t+1)(t+3)\cdots (t+2m-1)\\
& = & 2^{-m}[t^d](t+1)(t+3)\cdots (t+2m-1).
\end{eqnarray*}

\end{proof}

The formula given in Equation (\ref{eqnH}) in the proof of Theorem~\ref{egf2} is amenable to computation in Maple. To show its utility, we give some examples. Note that $h(2,n,0)$ appears in~\cite{bln}, while $h(3,n,0)$ appears in~\cite{bll}.

\vspace{0.6cm}
\begin{tabular}{l|l}
$(m,d)$ & $h(m,n,d)$ \\
\hline 
\\
 $(2,0)$ & $\frac{3}{4}3^n-\frac{1}{2}2^n+\frac{1}{2} -\frac{1}{4}(-1)^n$\\
 \\
\hline  \\
 $(2,1) $& $3^n -\frac{1}{2}2^n$\\
 \\
\hline \\
$(2,2) $& $\frac{1}{4}3^n -\frac{1}{2}+\frac{1}{4}(-1)^n$\\ 
\\
\hline \\
$(3,0) $&${\frac {15}{8}}\,{4}^{n}-\frac{9}{4}\,{3}^{n}+{\frac {13}{8}}\,{2}^{n}-\frac{3}{4}\,
 \left( -1 \right) ^{n}+\frac{3}{8}\, \left( -2 \right) ^{n}$\\
 \\
\hline
\\ $(3,3) $&$\frac{1}{8}4^n -\frac{3}{8}2^n-\frac{1}{8}(-2)^n$\\ 
\\
\hline \\
$(4,0) $&$ {\frac {105}{16}}{5}^{n}-{\frac {45}{4}}{4}^{n}+9{3}^{n}-\frac{11}{4}{2}^{n}-\frac{5}{8}- \left( -1 \right)^{n}\left( -2 \right) ^{n}{\frac {15}{16}}\left( -3 \right) ^{n}$\\
\\
\hline \\
$(4,4) $& $\frac{1}{16}5^n-\frac{1}{4}3^n+\frac{3}{8}-\frac{1}{4}(-1)^n+\frac{1}{16}(-3)^n$\\ 
\\
\hline \\
$(5,0) $&${\frac {945}{32}}{6}^{n}-{\frac {525}{8}}{5}^{n}+{\frac {2025}{32}}\,{4}^{n}-(30){3}^{n}+{\frac {23}{16}}{2}^{n}+{\frac {225}{32}}\left( -2 \right) ^{n}-{\frac {75}{8}}\left( -3 \right) ^{n}+{\frac {105}{32}}\left( -4 \right) ^{n}$\\
\end{tabular}
\vspace{0.6cm}

\begin{thm} \label{egf2cor}
Fix a positive integer $m$ and let $a(d)=[t^d](t+1)(t+3)\cdots (t+2m-1)$. Then the proportion of $d$-dimensional $\mathcal{H}$-strata in $\Oq$ tends to $a(d)/(m!2^m)$ as $n\rightarrow\infty$.
\end{thm}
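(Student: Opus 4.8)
The plan is to deduce the asymptotic proportion directly from the exact formula of Theorem~\ref{egf2}. Write $H(m,n):=\sum_{d\ge 0} h(m,n,d)$ for the total number of $\mathcal{H}$-strata in $\Oq$; since every $\mathcal{H}$-stratum has a well-defined dimension, the proportion of $d$-dimensional strata is exactly $h(m,n,d)/H(m,n)$, and the goal is to compute the limit of this ratio as $n\to\infty$.

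First I would record that, by Theorem~\ref{egf2}, $h(m,n,d)=\sum_{k=1-m}^{m+1}c_k(m,d)k^n$, so that summing over $d$ gives $H(m,n)=\sum_{k=1-m}^{m+1}C_k(m)\,k^n$ where $C_k(m):=\sum_{d\ge 0}c_k(m,d)$. Both $h(m,n,d)$ and $H(m,n)$ are thus fixed linear combinations of the exponentials $k^n$ with $k$ ranging over $\{1-m,\ldots,m+1\}$. The key observation is that among these integers the value $k=m+1$ is the unique one of maximal absolute value: indeed the only possible competitor is $k=1-m$, and $|1-m|=m-1<m+1$ for every $m\ge 1$. Consequently, as $n\to\infty$, every term with $k\ne m+1$ is $o((m+1)^n)$, so that
$$h(m,n,d)=c_{m+1}(m,d)(m+1)^n+o\bigl((m+1)^n\bigr),\qquad H(m,n)=C_{m+1}(m)(m+1)^n+o\bigl((m+1)^n\bigr).$$

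It remains to evaluate the two leading coefficients. From Theorem~\ref{egf2} we have $c_{m+1}(m,d)=2^{-m}a(d)$. Summing over $d$ and recalling that $a(d)=[t^d](t+1)(t+3)\cdots(t+2m-1)$, the sum $\sum_d a(d)$ is just this polynomial evaluated at $t=1$, so
$$C_{m+1}(m)=\sum_{d\ge 0}2^{-m}a(d)=2^{-m}\bigl[(t+1)(t+3)\cdots(t+2m-1)\bigr]_{t=1}=2^{-m}\cdot 2\cdot 4\cdots (2m)=m!.$$
In particular $C_{m+1}(m)=m!\neq 0$, so the denominator does not degenerate, and dividing the two asymptotic expressions above yields
$$\lim_{n\to\infty}\frac{h(m,n,d)}{H(m,n)}=\frac{c_{m+1}(m,d)}{C_{m+1}(m)}=\frac{2^{-m}a(d)}{m!}=\frac{a(d)}{m!\,2^m},$$
which is the desired conclusion. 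Note that this argument remains valid even when $a(d)=0$ (for instance whenever $d>m$, since the polynomial has degree $m$): there $c_{m+1}(m,d)=0$, the numerator is genuinely $o((m+1)^n)$, and the limit is $0=a(d)/(m!\,2^m)$, still in agreement. The only point requiring care is this dominance step, namely verifying that $m+1$ strictly exceeds the absolute value of every other exponent occurring in the formula; this is precisely the inequality $m-1<m+1$ noted above, and everything else is a routine extraction of leading terms.
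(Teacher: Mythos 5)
Your proof is correct, and it reaches the denominator's asymptotics by a genuinely different route than the paper. The paper computes the total number of $\mathcal{H}$-strata externally: it cites the fact that this total is the poly-Bernoulli number $B_n^{(-m)}$, invokes the Arakawa--Kaneko closed form $B_n^{(-m)}=\sum_{k=0}^m(k!)^2{n+1\brace k+1}{m+1\brace k+1}$, and extracts the asymptotic $B_n^{(-m)}\sim m!\,(m+1)^n$ from Stirling-number asymptotics. You instead stay entirely inside Theorem~\ref{egf2}: summing the exact formula over $d$ and evaluating $\sum_d a(d)$ as the polynomial $(t+1)(t+3)\cdots(t+2m-1)$ at $t=1$, which gives $2^m m!$ and hence the leading coefficient $C_{m+1}(m)=m!$. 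This is more self-contained (no appeal to the poly-Bernoulli literature) and has the side benefit of re-deriving, and thus cross-checking, the known asymptotic for the total count; the paper's version, conversely, uses the external formula as independent confirmation that the leading coefficients are consistent. Both arguments rest on the same dominance observation --- that $k=m+1$ is the unique exponent of maximal modulus among $\{1-m,\ldots,m+1\}$ since $|1-m|=m-1<m+1$ --- which the paper leaves implicit for the numerator but which you correctly make explicit, including the degenerate case $a(d)=0$.
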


\begin{proof}
Note that for fixed $m$, it is easily seen from Equation (\ref{stir1}) that the Stirling number of the second kind satisfies ${n\brace m} \sim m^n/m!$ as $n\rightarrow \infty$. From this we deduce that for $k<m$, ${n\brace k}/{n\brace m} \rightarrow 0$ as $n\rightarrow \infty$. Now recall that, for $n\geq m$, the total number of $\mathcal{H}$-primes is the poly-Bernoulli number \cite[Corollary 1.5]{launois1} and so we deduce from \cite[Theorem 2]{ak} that the total number of  $\mathcal{H}$-primes in $m\times n$ quantum matrices is equal to  $$B_n^{(-m)} = \sum_{k=0}^m (k!)^2{n+1\brace k+1}{m+1\brace k+1}.$$ Thus as $n\rightarrow \infty$, we have
\begin{eqnarray*}
B_n^{(-m)} &\sim & (m!)^2{n+1\brace m+1}\\
& \sim & (m!)(m+1)^n.
\end{eqnarray*}
Therefore, by Theorem~\ref{egf2}, we have
\begin{eqnarray*}
\lim_{n\rightarrow \infty} \frac{\textnormal{number of $d$-dimensional $\mathcal{H}$-strata in $\Oq$}}{\textnormal{total number of $\mathcal{H}$-strata in $\Oq$}} &=& \frac{a(d)}{m!2^m}.
\end{eqnarray*}
\end{proof}

A more sophisticated asymptotic analysis would be of interest. In particular, we pose the following question.  

For a fixed positive integer $d$, does $$\lim_{n\rightarrow\infty} \frac{\textnormal{ number of $d$-dimensional $\C{H}$-strata in $\C{O}_q(M_{n,n}(\mathbb{K}))$}}{\textnormal{total number of $\C{H}$-strata in $\C{O}_q(M_{n,n}(\mathbb{K}))$}}$$ exist?  If so, what is its value?

\bibliography{casteels1}
\bibliographystyle{amsplain}

\end{document}